\newcommand{\R}{{\mathbb R}}
\newcommand{\Z}{{\mathbb Z}}
\newcommand{\N}{{\mathbb N}}
\newcommand{\T}{{\mathbb T}}
\newcommand{\C}{{\mathbb C}}
\newcommand{\cT}{{\mathcal T}}
\newcommand{\diam}{{\rm diam}}
\newcommand{\Opt}{{\rm Opt}}
\newcommand{\Act}{{\rm Act}}
\newcommand{\Isom}{{\rm Isom}}
\newtheorem{thm}{Theorem}[section]
\newtheorem{cor}[thm]{Corollary}
\newtheorem{lem}[thm]{Lemma}
\newtheorem{definition}[thm]{Definition}
\newtheorem{example}[thm]{Example}
\newtheorem{remark}[thm]{Remark}
\theoremstyle{definition}
\keywords{Gromov-Hausdorff distance, Wasserstein space, group action, invariant measure, amenable group, stability, expansiveness, pseudo-orbit tracing property}
\subjclass[2010]{37C85, 37C40, 37C75}
\begin{document}
\title[Gromov-Hausdorff distances for dynamical systems]{Gromov-Hausdorff distances for dynamical systems}

\author{Nhan-Phu Chung}
\address{Nhan-Phu Chung, Department of Mathematics, Sungkyunkwan University, 2066 Seobu-ro, Jangan-gu, Suwon-si, Gyeonggi-do, Korea 16419.} 
\email{phuchung@skku.edu;phuchung82@gmail.com}
\date{\today}
\maketitle
\begin{abstract}
 We study equivariant Gromov-Hausdorff distances for general actions which are not necessarily isometric as Fukaya introduced. We prove that if an action is expansive and has the pseudo-orbit tracing property then it is stable under our adapted equivariant Gromov-Hausdorff topology. Finally, using Lott and Villani's ideas of optimal transport, we investigate equivariant Gromov-Hausdorff convergence for actions of locally compact amenable groups on Wasserstein spaces.         
\end{abstract}
\section{Introduction}
Gromov-Hausdorff distance on spaces of metric spaces was introduced by Gromov in his pioneering work in 1981 \cite{Gromov}. Gromov used it to prove his celebrated theorem: a finitely generated group is virtually nilpotent if and only it has polynomial growth. After that Gromov-Hausdorff distance has been extensively used to study  in convergence and collapsing of Riemannian manifolds by Cheeger, Colding, Fukaya, Gromov, and Yamaguchi \cite{CC,CFG,Fu86, Fu88, Fu90,FY}. In particular, in 1980s-1990s \cite{Fu86, Fu88, Fu90,FY}, Fukaya introduced the notion of equivariant Gromov-Hausdorff convergence for isometric actions of topological groups on Riemannian manifolds to study collapsing of Riemannian under bounded curvature and diameter, and fundamental groups of almost negatively curved manifolds. Here we would like to study distances on the space of continuous actions, which may be not isometric, a systematic study of adapted versions of equivariant Gromov-Hausdorff distance would be needed.

Recently, Arbieto and Morales used techniques of \cite{Walters78} to establish stability under Gromov-Hausdorff topology for expansive maps having pseudo-orbit tracing property \cite{AM}. After that, combining ideas of \cite{AM} and \cite{ChungLee}, Arbieto and Morales' result has been extended for expansive actions of finitely generated groups with POTP in \cite{DLM} and \cite{KDD}. In another side, several stability results of equivariant Gromov-Hausdorff topology also have been proved for certain isometric actions \cite{Fu88,Harvey16,Harvey17}.
In this paper, we establish a result of stability under equivariant Gromov-Hausdorff distance for non-isometric actions of countable groups $G$ and $H$. More precisely, we prove 
\begin{thm}
\label{T-expansive and POTP implies GH-stability for two groups}
If an action $\alpha$ of a countable group $G$ on a proper metric space $(X,d_X)$ is expansive and satisfies POTP then it is strongly GH-stable. More precisely,
\begin{enumerate}
\item if $c>0$ is an expansive constant of $\alpha$ then for every $0<\varepsilon<c$ there exists $\delta>0$ such that if $\beta$ is a continuous action of a topological group $H$ on a metric space $(Y,d_Y)$ with $d_{GH,1}(\alpha,\beta)<\delta$ then there exist an $\varepsilon$-isometry $h:Y\to X$ and a homomorphism $\rho:G\to H$ satisfying $\alpha_g\circ h=h\circ \beta_{\rho(g)}$ for every $g\in G$. 
\item if furthermore $Y$ is compact then the $\varepsilon$-isometry $h$ can be chosen to be continuous.
\end{enumerate} 
\end{thm}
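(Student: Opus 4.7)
The plan is to adapt the Walters--Arbieto--Morales scheme for Gromov--Hausdorff stability of expansive POTP systems to the equivariant setting with two possibly different groups $G$ and $H$. The engine is that a small perturbation in $d_{GH,1}$ produces pseudo-orbits in $X$ indexed by $\beta$-orbits in $Y$; POTP traces them by genuine $\alpha$-orbits; and expansiveness makes the tracing unique, simultaneously yielding the map $h$ and promoting the approximate compatibility between $G$ and $H$ to an honest homomorphism $\rho$.

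First I would choose constants. Given $\varepsilon\in(0,c)$, I shrink it so that $3\varepsilon<c$, and apply POTP to produce $\eta>0$ such that every $\eta$-pseudo-orbit of $\alpha$ is $(\varepsilon/3)$-shadowed. I then take $\delta$ much smaller than $\eta$. From the hypothesis $d_{GH,1}(\alpha,\beta)<\delta$ I extract a relation $R\subset X\times Y$ with $\delta$-dense projections and distortion less than $\delta$, together with a map $\rho_0\colon G\to H$ such that whenever $(x,y)\in R$ and $g\in G$, some $(x',y')\in R$ satisfies $d_X(x',\alpha_g(x))<\delta$ and $d_Y(y',\beta_{\rho_0(g)}(y))<\delta$; properness of $X$ is used to keep such selections available.

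Next, for each $y\in Y$ I pick $x_g\in X$ with $(x_g,\beta_{\rho_0(g)}(y))\in R$ for every $g\in G$. The approximate equivariance extracted above makes $(x_g)_{g\in G}$ an $\eta$-pseudo-orbit of $\alpha$, so POTP gives $z\in X$ with $d_X(\alpha_g(z),x_g)<\varepsilon/3$ for all $g$; expansiveness forces $z$ to be unique (using $3\varepsilon<c$), and I define $h(y):=z$, independent of the choices $x_g$. The $\delta$-density of the $X$-projection of $R$ translates into $\varepsilon$-surjectivity of $h$, the distortion bound of $R$ transfers to $|d_X(h(y),h(y'))-d_Y(y,y')|<\varepsilon$, and comparing the pseudo-orbit constructed at $\beta_{\rho_0(g)}(y)$ with the shifted pseudo-orbit at $y$ then invoking expansiveness yields $\alpha_g\circ h=h\circ\beta_{\rho_0(g)}$. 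The same comparison, applied to $\rho_0(g_1g_2)$ versus $\rho_0(g_1)\rho_0(g_2)$, upgrades $\rho_0$ to a genuine homomorphism $\rho\colon G\to H$. For compact $Y$, continuity of $h$ is obtained by a local reselection of the section $y\mapsto(x_g(y))$ inside the $\delta$-ambiguity and invoking uniform stability of shadowing on a proper metric space.

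The main obstacle is precisely this last upgrade of $\rho_0$ from a finitely approximately multiplicative map into a genuine homomorphism valued in the whole group $H$. Establishing $\rho(g_1g_2)=\rho(g_1)\rho(g_2)$ requires invoking expansiveness along the entire $\alpha$-orbit of $h(y)$, which is where properness of $X$ becomes indispensable: it allows the POTP and expansiveness estimates to propagate uniformly along infinite orbits, so a single $\delta$ extracted from the condition $d_{GH,1}(\alpha,\beta)<\delta$ controls every pair $(g_1,g_2)$ simultaneously. A secondary subtlety in part (2) is reconciling continuity of $h$ with the equivariance relation, since the raw definition of $h(y)$ involves arbitrary choices of $x_g$; a compactness-based selection inside the sub-$\delta$ cells, combined with the uniqueness from expansiveness, handles this.
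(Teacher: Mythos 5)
Your core mechanism (pseudo-orbits from the perturbation, POTP to trace, expansiveness for uniqueness, hence a well-defined $h$ with the right metric and equivariance properties) is the same as the paper's, but there is a genuine gap in how you handle the homomorphism $\rho$. You treat the data coming from $d_{GH,1}(\alpha,\beta)<\delta$ as a correspondence together with a mere map $\rho_0:G\to H$ that is only approximately multiplicative, and you propose to ``upgrade'' it to a homomorphism afterwards. Two problems. First, this misreads the definition in this paper: an $\varepsilon$-eGHA $(\rho,f):\alpha\to\beta$ (the notion entering $d_{GH,1}$) already requires $\rho:G\to H$ to be a homomorphism --- that is exactly the author's modification of Fukaya's definition, and it is essential here. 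The paper's proof simply takes the $\delta$-isometry $u:Y\to X$ and the homomorphism $\rho$ supplied by the hypothesis, observes that $\{u(\beta_{\rho(t)}y)\}_{t\in G}$ is a $(\delta,S)$-pseudo-orbit because $\rho(st)=\rho(s)\rho(t)$, and never needs to construct $\rho$. Second, your proposed upgrade would fail: the shadowing-plus-expansiveness argument only yields $h\circ\beta_{\rho_0(g_1g_2)}=h\circ\beta_{\rho_0(g_1)\rho_0(g_2)}$ as maps $Y\to X$, and since $h$ is merely an $\varepsilon$-isometry (in general neither injective nor, without further hypotheses on $\beta$, separating elements of $H$), you cannot conclude $\rho_0(g_1g_2)=\rho_0(g_1)\rho_0(g_2)$ in $H$. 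If $\rho$ were not already a homomorphism by hypothesis, the statement as formulated would not follow from your argument.

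A secondary weakness is part (2). Continuity of $h$ does not come from ``local reselection inside the $\delta$-ambiguity'' (there is nothing to reselect: $h(y)$ is the unique tracing point) nor from a vague ``uniform stability of shadowing.'' The paper's argument uses a specific consequence of expansiveness on a proper space (Lemma~\ref{L-expansiveness determines the topology}): for every $\varepsilon_2>0$ there is a finite $A\subset G$ such that $\sup_{t\in A}d_X(\alpha_tx,\alpha_ty)\leq c$ forces $d_X(x,y)<\varepsilon_2$. One then uses compactness of $Y$ to get uniform continuity of the finitely many maps $\beta_{\rho(t)}$, $t\in A$, and the tracing estimate $d_X(\alpha_th(y),u(\beta_{\rho(t)}y))<\varepsilon_1$ to bound $\sup_{t\in A}d_X(\alpha_th(y_1),\alpha_th(y_2))$ by $c$ for nearby $y_1,y_2$. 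You should supply this finite-window expansiveness lemma explicitly; it is also where properness of $X$ actually enters.
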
 

Lastly, using ideas of Lott and Villani in \cite{LV}, we establish two equivariant Gromov-Hausdorff convergence results for induced actions on Wassertein spaces $(P_p(X),W_p)$ of continuous actions of topological groups on compact metric spaces $X$. 
\begin{thm}
\label{T-equivariant Gromov-Hausdorff of actions on Wasserstein spaces}
Let $\{\alpha_n\}$ be a sequence of continuous actions of a topological group $G$ on compact metric spaces $\{(X_n,d_n)\}$ and for every $p\geq 1$, let $(\alpha_n)_*$ be the induced action of $\alpha_n$ on $P_p(X_n)$ for every $n\in \N$. If $\lim_{n\to \infty}d_{mGH}(\alpha_n,\alpha)=0$ for some action $\alpha$ of $G$ on a compact metric space $(X,d)$ then $\lim_{n\to \infty}d_{mGH}((\alpha_n)_*,\alpha_*)=0$.
\end{thm}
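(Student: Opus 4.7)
The plan is to decouple the argument into two pieces: first, a non-equivariant statement essentially due to Lott and Villani, namely that a Gromov--Hausdorff $\varepsilon$-approximation between compact metric spaces induces an approximation of their $p$-Wasserstein spaces with distortion $O(\varepsilon)$; and second, an equivariant observation that pointwise approximate equivariance of a map $f : X_n \to X$ is inherited by its pushforward $f_* : P_p(X_n) \to P_p(X)$ under the induced actions $(\alpha_n)_*$ and $\alpha_*$.

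First I would fix a tolerance $\varepsilon > 0$ (and, if the definition of $d_{mGH}$ parametrizes equivariance over compact subsets of $G$, a compact $K \subset G$), and invoke the hypothesis $d_{mGH}(\alpha_n, \alpha) \to 0$ to produce maps $f_n : X_n \to X$ and $h_n : X \to X_n$ that are $\varepsilon_n$-dense, $\varepsilon_n$-distance-preserving, and $\varepsilon_n$-equivariant in the sense
\[
\sup_{g \in K,\, x \in X_n} d\bigl(f_n((\alpha_n)_g(x)),\ \alpha_g(f_n(x))\bigr) \leq \varepsilon_n,
\]
together with the symmetric bound for $h_n$, with $\varepsilon_n \to 0$. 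Boundedness of all diameters is automatic since $X$ is compact and the $X_n$ are GH-close.

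Next I would carry out the non-equivariant step. Given $\mu, \nu \in P_p(X_n)$ with an optimal coupling $\pi$, the pushforward $(f_n \times f_n)_* \pi$ is a coupling of $(f_n)_* \mu$ and $(f_n)_* \nu$, and $|d(f_n(x), f_n(y)) - d_n(x,y)| \leq \varepsilon_n$ together with Minkowski gives $W_p((f_n)_* \mu, (f_n)_* \nu) \leq W_p(\mu, \nu) + \varepsilon_n$; the reverse inequality follows from the same computation after composing with $h_n$ and using $d_n(h_n \circ f_n(x), x) \leq 2\varepsilon_n$. Density of the image of $(f_n)_*$ in $P_p(X)$ up to vanishing error is obtained by approximating each $\rho \in P_p(X)$ by a finitely supported measure (compactness of $X$) and pulling it back via $(h_n)_*$.

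Finally, for approximate equivariance of $(f_n)_*$: for $g \in K$ and $\mu \in P_p(X_n)$, the map $\Phi_g : X_n \to X \times X$ defined by $\Phi_g(x) = (f_n((\alpha_n)_g(x)), \alpha_g(f_n(x)))$ pushes $\mu$ forward to a coupling of $(f_n)_* ((\alpha_n)_g)_* \mu$ and $(\alpha_g)_* (f_n)_* \mu$ with $L^p$ cost at most $\varepsilon_n$, so
\[
W_p\bigl((f_n)_* ((\alpha_n)_g)_* \mu,\ (\alpha_g)_* (f_n)_* \mu\bigr) \leq \varepsilon_n.
\]
Applying the same reasoning to $h_n$ and assembling the three estimates bounds $d_{mGH}((\alpha_n)_*, \alpha_*)$ by a constant depending on $p$ times $\varepsilon_n$, which tends to zero. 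The main obstacle I expect is bookkeeping: matching the precise quantitative form of $d_{mGH}$ used in the paper --- in particular, how equivariance tolerance is parametrized uniformly in $G$ and how GH-density is quantified when $P_p(X_n)$ has substantially larger diameter than $X_n$ --- so that the pushforwards $(f_n)_*$ and $(h_n)_*$ are genuine witnesses of equivariant GH convergence in the Wasserstein category, rather than merely in some weaker sense.
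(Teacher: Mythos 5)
Your proposal follows essentially the same route as the paper: the paper's Lemma 4.3 shows that a measurable $\varepsilon$-GH approximation $f$ between the actions induces a $\tilde\varepsilon$-GH approximation $f_*$ between the induced actions on $P_p$, proving distance control by pushing forward an optimal coupling via $f\times f$ and proving approximate equivariance and approximate surjectivity via the coupling $(f\circ\alpha_g,\ \alpha_g\circ f)_*\mu$ (the paper's Lemma 2.7), which is exactly your map $\Phi_g$. Two small remarks: your use of Minkowski's inequality gives the cleaner additive error $W_p(f_*\mu,f_*\nu)\le W_p(\mu,\nu)+\varepsilon_n$, whereas the paper uses $|x^p-y^p|\le p|x-y|(x^{p-1}+y^{p-1})$ and gets $(pM\varepsilon)^{1/p}$; and in the reverse inequality and the density step you implicitly assume $d_n(h_n\circ f_n(x),x)\le 2\varepsilon_n$, which does not follow from the definition of $d_{mGH}$ for an arbitrary second witness $h_n$ --- one should instead use the (measurable modification of the) approximate inverse of $f_n$, as the paper does via its Definition 2.4 and Lemma 4.1, after which your argument goes through.
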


\begin{thm}
\label{T-Gromov-Hausdorff convergence of invariant measures for amenable groups}
Let $\{\alpha_n\}$ be a sequence of isometric actions of a locally compact, $\sigma$-compact amenable group $G$ on compact metric spaces $\{(X_n,d_n)\}$ and for every $p\geq 1$, let $(\alpha_n)_*$ be the induced action of $\alpha_n$ on $P_p(X_n)$ for every $n\in \N$. If $d_{GH}(\alpha_n,\alpha)\to0$ as $n\to \infty$ for some action $\alpha$ of $G$ on a compact metric space $(X,d)$ then $\alpha$ is an isometric action and
$$\lim_{n\to\infty}d_{GH}(P_p^G(X_n),P_p^G(X))= 0.$$ 
\end{thm}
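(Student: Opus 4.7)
The plan is to combine Theorem~\ref{T-equivariant Gromov-Hausdorff of actions on Wasserstein spaces}, which gives $d_{mGH}$-convergence of the induced actions on Wasserstein spaces, with a Følner averaging procedure that transports $G$-invariant probability measures across the Gromov--Hausdorff approximations. I would first verify that the limit action $\alpha$ is isometric: for fixed $g\in G$ and $x,y\in X$, pick approximants $x_n,y_n\in X_n$ under the GH identifications; since $(\alpha_n)_g$ is an isometry, $d_n((\alpha_n)_gx_n,(\alpha_n)_gy_n)=d_n(x_n,y_n)$, and passing to the limit using the near-equivariance coming from $d_{GH}(\alpha_n,\alpha)\to 0$ yields $d(\alpha_g x,\alpha_g y)=d(x,y)$.

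Next I would apply Theorem~\ref{T-equivariant Gromov-Hausdorff of actions on Wasserstein spaces}, noting that $d_{GH}$-convergence dominates $d_{mGH}$-convergence and hence $d_{mGH}((\alpha_n)_*,\alpha_*)\to 0$. Because an isometric action on $(X,d)$ induces an isometric action on $(P_p(X),W_p)$ (an optimal coupling remains optimal after being pushed forward by a joint isometry), both $(\alpha_n)_*$ and $\alpha_*$ are isometric; in this isometric setting the metric version of the equivariant distance can be matched with the Fukaya-style $d_{GH}$, so the convergence upgrades to $d_{GH}((\alpha_n)_*,\alpha_*)\to 0$.

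The core step is the transport of invariant measures. Fix $\varepsilon>0$, choose a compact $K\subseteq G$ and a Følner set $F\subseteq K$ (with respect to Haar measure) such that $|F\triangle gF|/|F|<\varepsilon$ for $g$ ranging over a prescribed compact exhausting family. For $n$ large, GH maps $\iota_n\colon X\to X_n$ and $\pi_n\colon X_n\to X$ are $\varepsilon$-isometric and $\varepsilon$-equivariant on $K$. Given $\mu\in P_p^G(X)$, set $\tilde\mu_n:=(\iota_n)_*\mu$ and form the Følner average
\[
\mu_n^F:=\frac{1}{|F|}\int_F ((\alpha_n)_g)_*\tilde\mu_n\,dg.
\]
For $g\in F\subseteq K$, the $\varepsilon$-equivariance together with the $G$-invariance of $\mu$ gives $W_p(((\alpha_n)_g)_*\tilde\mu_n,\tilde\mu_n)=O(\varepsilon)$, and convexity of $W_p$ yields $W_p(\mu_n^F,\tilde\mu_n)=O(\varepsilon)$. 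Extracting a weak* limit point along an increasing Følner sequence produces $\mu_n\in P_p^G(X_n)$ within $O(\varepsilon)$ of $\mu$ in $W_p$ after the GH identification; the reverse inclusion is symmetric, yielding the desired Hausdorff bound.

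The main obstacle I foresee is the tension between the Følner condition, which requires the averaging set to grow through arbitrarily large compacts of $G$ to guarantee invariance of the limit, and the equivariance defect of the GH approximations, which is only controlled on one compact $K$. The plan is to decouple these by first fixing $K$ (and $F\subseteq K$) to secure the final invariance estimate, then choosing $n$ large so that the approximations become $\varepsilon$-equivariant on $K$; weak* compactness of $P_p(X_n)$ combined with the $\sigma$-compactness of $G$ will permit the diagonal extraction needed to reach a genuinely $G$-invariant measure.
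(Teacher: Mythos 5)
Your core mechanism --- push an invariant measure forward through a measurable GH approximation, average over a F{\o}lner sequence, and pass to a weak* limit to land back in the space of invariant measures, with distance estimates controlled by the coupling bound of Lemma \ref{L-contraction of Wasserstein distance} --- is exactly the paper's proof (Lemma \ref{L-GHA for actions of amenable groups}), and your identification of the limit action as isometric is the paper's Lemma \ref{L-closedness of isometric actions}. The paper packages the conclusion through finite $\varepsilon$-nets and Lemma \ref{L-finite net implies closeness of GH} rather than through two-sided approximate isometries of the full spaces, but that is cosmetic. Your detour through Theorem \ref{T-equivariant Gromov-Hausdorff of actions on Wasserstein spaces} is neither needed nor helpful: $P_p^G(X)$ is the fixed-point set of $\alpha_*$, and GH convergence of actions does not control fixed-point sets, which is precisely why the F{\o}lner averaging is the real content.

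Two points need repair. First, you write $(\iota_n)_*\mu$ for a GH approximation $\iota_n$ produced by $d_{GH}(\alpha_n,\alpha)<\varepsilon$; such a map need not be Borel, so the pushforward is not defined. This is why the paper inserts Lemmas \ref{L-measurable GHA} and \ref{L-measurable eGHA}, which use that the target actions are isometric to replace an $\varepsilon$-equivariant GH approximation by a measurable $5\varepsilon$-equivariant one. Second, the ``tension'' you identify between the F{\o}lner condition and equivariance on a single compact $K$ is illusory here, and your proposed resolution would not work if the tension were real: averaging over one fixed F{\o}lner set $F\subseteq K$ only yields approximate invariance, and to extract a genuinely $G$-invariant weak* limit you must let the F{\o}lner sets exhaust $G$, which forces you to control $W_p(((\alpha_n)_g)_*\tilde\mu_n,\tilde\mu_n)$ for all $g\in G$, not just for $g\in K$. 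The escape is that $d_{GH}$ in this paper is $d_{GH,S}$ with $S=G$ (Definition \ref{D-GH distance for one group}), so the equivariance defect $\sup_{g\in G}d_{sup}(\alpha_{n,g}\circ\iota_n,\iota_n\circ\alpha_g)\leq\varepsilon$ is already uniform over the whole group; with that, every F{\o}lner average stays within $\varepsilon$ of $\tilde\mu_n$, and lower semicontinuity of $W_p$ under weak* convergence preserves the bound in the limit. With these two corrections your argument coincides with the paper's.
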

Here $P_p^G(X)$ is the space of $G$-invariant measures on $X$. As a consequence, we get
\begin{cor}
\label{C-uniquely ergodic}
Let $G$ be a $\sigma$-compact, locally compact amenable group and let $\{\alpha_n\}$ be a sequence of isometric actions of $G$ on compact metric spaces $\{(X_n,d_n)\}$. Assume that $\lim_{n\to \infty}d_{GH}(\alpha_n,\alpha)=0$ for some action $\alpha$ of $G$ on a compact metric space $(X,d)$. If $\alpha$ is uniquely ergodic then $\lim_{n\to \infty}\diam (P_p^G(X_n))=0$ for every $p\geq 1$. 
\end{cor}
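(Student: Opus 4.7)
The plan is to deduce Corollary~\ref{C-uniquely ergodic} directly from Theorem~\ref{T-Gromov-Hausdorff convergence of invariant measures for amenable groups}, exploiting that the limit space $P_p^G(X)$ is trivial under the unique ergodicity hypothesis. The argument breaks into three short steps.

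First, I would feed the hypotheses of the corollary into Theorem~\ref{T-Gromov-Hausdorff convergence of invariant measures for amenable groups}: each $\alpha_n$ is isometric, $G$ is $\sigma$-compact, locally compact and amenable, and $d_{GH}(\alpha_n,\alpha)\to 0$, so the theorem yields that $\alpha$ is isometric and that $\lim_{n\to\infty} d_{GH}(P_p^G(X_n),P_p^G(X))=0$ for every $p\geq 1$.

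Second, I would note that since $X$ is compact, every Borel probability measure on $X$ has finite $p$-th moment, so $P_p(X)$ coincides with the full space of Borel probability measures on $X$, and therefore $P_p^G(X)$ is exactly the set of $G$-invariant Borel probability measures for $\alpha$. The hypothesis that $\alpha$ is uniquely ergodic then forces $P_p^G(X)$ to be a singleton, so $\diam(P_p^G(X))=0$.

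Third, I would invoke the elementary inequality $|\diam(A)-\diam(B)|\leq 2\,d_{GH}(A,B)$, valid for any pair of nonempty compact metric spaces $A$, $B$. Applied with $A=P_p^G(X_n)$ and $B=P_p^G(X)$ and combined with the previous two steps, it gives $\diam(P_p^G(X_n))\leq 2\,d_{GH}(P_p^G(X_n),P_p^G(X))\to 0$, which is the desired conclusion. There is essentially no obstacle: all the substantive work has been packaged into Theorem~\ref{T-Gromov-Hausdorff convergence of invariant measures for amenable groups}, and the corollary is a one-line combination of unique ergodicity with the fact that Gromov-Hausdorff convergence controls diameters.
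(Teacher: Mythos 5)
Your proof is correct and is exactly the deduction the paper intends: the corollary is stated as an immediate consequence of Theorem \ref{T-Gromov-Hausdorff convergence of invariant measures for amenable groups}, with unique ergodicity collapsing $P_p^G(X)$ to a point and the standard estimate $|\diam(A)-\diam(B)|\leq 2\,d_{GH}(A,B)$ transferring this to the approximating spaces. No discrepancy with the paper's (implicit) argument.
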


The paper is organized as following. In Section 2, we review Gromov-Hausdorff distance for the space of all metric spaces, and Wasserstein spaces.  In Section 3, we define our adapted definitions for equivariant Gromov-Hausdorff distances for group actions and present their basic properties: the subsections 3.1 and 3.2 are for the cases of actions of a topological group $G$, and actions of topological groups $G$ and $H$, respectively. We also prove in this section theorem \ref{T-expansive and POTP implies GH-stability for two groups} and illustrate examples that it can apply. Finally, in Section 4, we will explain whenever Gromov-Hausdorff approximations can be approximated by measurable ones and then prove theorems \ref{T-equivariant Gromov-Hausdorff of actions on Wasserstein spaces}, \ref{T-Gromov-Hausdorff convergence of invariant measures for amenable groups}.  

\textbf{Acknowledgements:} Part of this paper was carried out when the author visited University of Science, Vietnam National University at Hochiminh city on July 2018. I am grateful to Dang Duc Trong for his warm hospitality. The author was partially supported by the National Research Foundation of Korea (NRF) grants funded by the Korea government(MSIP) (No. NRF- 2016R1A5A1008055, No. NRF-2016R1D1A1B03931922 and No. NRF-2019R1C1C1007107). We thank the anonymous referees for their useful comments which vastly improve the paper.
\section{Preliminaries}

First, we recall definition of Gromov-Hausdorff distance and its basic properties. For more details, the readers are referred to \cite{BBI,Rong,Shioya}. Let $(X,d)$ be a metric space and let $\varepsilon>0$. A subset $S$ of $X$ is called an \textit{$\varepsilon$-net} if $B'_\varepsilon(S)=X$, where $B'_\varepsilon(S):=\{y\in X: \mbox{ there exists } s\in S, d(y,s)\leq \varepsilon\}$. 
\begin{definition}
Let $(Z,d)$ be a metric space and $X,Y$ be subsets of $Z$. The Hausdorff distance between $X$ and $Y$, denoted by $d_H(X,Y)$, is the infimum of $\varepsilon>0$ such that $X\subset B'_\varepsilon(Y)$ and $Y\subset B'_\varepsilon(X)$.
\end{definition} 
\begin{definition}
Let $X$ and $Y$ be metric spaces. The Gromov-Hausdorff (GH) distance between $X$ and $Y$, denoted by $d_{GH}(X,Y)$, is defined as the infimum of $r>0$ such that there exist a metric space $(Z,d)$ and its subspaces $X'$ and $Y'$ being isometric to $X$ and $Y$ respectively such that $d_H(X',Y')<r$.
\end{definition}
The GH-distance $d_{GH}$ is a metric on the space of all isometry classes of compact metric spaces \cite[Theorem 7.3.30]{BBI}.
\begin{definition}
Let $(X,d_X)$ and $(Y,d_Y)$ be metric spaces and let $\varepsilon>0$. An $\varepsilon$-isometric map between $X$ and $Y$ is a map $f:X\to Y$ satisfying $$|d_Y(f(x_1),f(x_2))-d_X(x_1,x_2)|\leq \varepsilon \mbox{ for every } x_1,x_2\in X.$$
We call a map $f:X\to Y$ is an $\varepsilon$-isometry if it is an $\varepsilon$-isometric map and $Y=B'_\varepsilon(f(X))$. In this case the map $f$ is also called an $\varepsilon$-GH approximation from $X$ to $Y$. 
\end{definition} 
\begin{definition}
\label{D-inverse GHA}
An $\varepsilon$-GH approximation $f:X\to Y$ has an approximation inverse $f':Y\to X$ constructed as following. Given $y\in Y$, we choose $x\in X$ such that $d_Y(f(x),y)\leq \varepsilon$ and we define $f'(y):=x$. Then $f':Y\to X$ is a $3\varepsilon$-GH approximation. From the construction of $f'$ it is clear that $\sup_{x\in X}d_X(x,(f'\circ f)(x))\leq 2\varepsilon$ and $\sup_{y\in Y}d_Y(y,(f\circ f')(y))\leq \varepsilon$.
\end{definition}
For every $\varepsilon>0$, if $d_{GH}(X,Y)<\varepsilon$ then there exists a $2\varepsilon$-GH approximation from $X$ to $Y$; and if there exists an $\varepsilon$-GH approximation $f:X\to Y$ then $d_{GH}(X,Y)<2\varepsilon$ \cite[Corollary 7.3.28]{BBI}.
\begin{definition}
Let $X$ and $Y$ be metric spaces. We define an alternative GH-distance between $X$ and $Y$, denoted by $\hat{d}_{GH}(X,Y)$, as the following.
$$\hat{d}_{GH}(X,Y):=\inf\{\varepsilon>0: \mbox{ there are }\varepsilon-GH \mbox{ approximations } f:X\to Y, g:Y\to X\}$$
if the infimum exists, and $\hat{d}_{GH}(X,Y)$ is $\infty$ if the infimum does not exist.
\end{definition}
From \cite[Lemma 1.3.4]{Rong}, we know that $\frac{2}{3}d_{GH}\leq \hat{d}_{GH}\leq 2d_{GH}$.

\begin{definition}
Let $X$ and $Y$ be metric spaces and let $\varepsilon,\delta>0$. We say that $X$ and $Y$ are $(\varepsilon,\delta)$-approximations of each other if there exist an $\varepsilon$-net $\{x_1,\cdots,x_m\}$ in $X$ and an $\varepsilon$-net $\{y_1,\cdots,y_m\}$ in $Y$ satisfying
$$|d_X(x_i,x_j)-d_Y(y_i,y_j)|<\delta, \mbox{ for every } 1\leq i,j\leq m.$$
\end{definition}
From the proof of \cite[Proposition 7.4.11]{BBI} we get the following lemma
\begin{lem}
\label{L-finite net implies closeness of GH}
Let $X$ and $Y$ be metric spaces. If $X$ and $Y$ are $(\varepsilon,\delta)$-approximations of each other then $d_{GH}(X,Y)<2\varepsilon+\delta$.
\end{lem}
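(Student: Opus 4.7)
The plan is to prove the lemma by an explicit gluing construction, building a metric on the disjoint union $X \sqcup Y$ that restricts to the given metrics on each factor and makes $X$ and $Y$ Hausdorff-close. Since $d_{GH}(X,Y)$ is bounded above by any Hausdorff distance realized in such an admissible gluing, this will immediately yield the desired bound. The condition $|d_X(x_i,x_j)-d_Y(y_i,y_j)|<\delta$ gives us a natural (if not quite isometric) identification of the two $\varepsilon$-nets, and the idea is to use this identification to "bridge" any $x\in X$ with any $y\in Y$ through the nearest net points.

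Concretely, I would define $\tilde d$ on $X\sqcup Y$ by setting $\tilde d|_{X\times X}=d_X$, $\tilde d|_{Y\times Y}=d_Y$, and, for $x\in X$ and $y\in Y$,
\[
\tilde d(x,y)=\min_{1\le i\le m}\bigl(d_X(x,x_i)+d_Y(y_i,y)\bigr)+\tfrac{\delta}{2}.
\]
The additive constant $\delta/2$ is the key slack: it is chosen so that two points $x_i\in X$ and $y_i\in Y$ sit at distance $\delta/2$, which absorbs the distortion of the (almost-)identification $x_i\leftrightarrow y_i$. The main thing to verify is that $\tilde d$ is a genuine metric, which reduces to three mixed triangle inequalities (the all-$X$ and all-$Y$ cases are immediate).

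For the inequality $\tilde d(y,y')\le \tilde d(y,x)+\tilde d(x,y')$ with $x\in X$ and $y,y'\in Y$, I would pick arbitrary indices $i,j$ realizing (or approximating) the two minima and use the net hypothesis in the form
\[
d_Y(y_i,y_j)\le d_X(x_i,x_j)+\delta \le d_X(x_i,x)+d_X(x,x_j)+\delta,
\]
then combine this with $d_Y(y,y')\le d_Y(y,y_i)+d_Y(y_i,y_j)+d_Y(y_j,y')$; the two $\delta/2$ contributions from the two $\tilde d$-terms on the right add up exactly to the $\delta$ needed. The symmetric case with two points in $X$ is handled identically, and the case $\tilde d(x,y)\le d_X(x,x')+\tilde d(x',y)$ is straightforward from the triangle inequality applied inside the $\min$. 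I expect the triangle-inequality verification to be the only slightly technical step, but it is a routine check once the right constant $\delta/2$ is chosen.

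Finally, to estimate the Hausdorff distance in this gluing: given $x\in X$, choose $x_i$ with $d_X(x,x_i)\le\varepsilon$, so $\tilde d(x,y_i)\le \varepsilon+\delta/2$; symmetrically, every $y\in Y$ lies within $\varepsilon+\delta/2$ of some $x_j\in X$. Hence $d_H(X,Y)\le \varepsilon+\delta/2$ in $(X\sqcup Y,\tilde d)$, giving $d_{GH}(X,Y)\le \varepsilon+\delta/2<2\varepsilon+\delta$ as claimed. The bound we obtain is in fact a factor of two better than stated, which is harmless for all applications in the paper.
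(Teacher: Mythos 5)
Your argument is correct, and it takes a genuinely different route from the one the paper relies on. The paper gives no proof of its own but cites the proof of Proposition 7.4.11 in Burago--Burago--Ivanov, which is modular: one notes $d_{GH}(X,\{x_1,\dots,x_m\})\le\varepsilon$ and $d_{GH}(Y,\{y_1,\dots,y_m\})\le\varepsilon$ because each net is $\varepsilon$-dense in its ambient space, bounds the distance between the two finite nets by $\delta/2$ via the correspondence $x_i\leftrightarrow y_i$ of distortion $<\delta$, and then applies the triangle inequality for $d_{GH}$ to get $2\varepsilon+\delta/2<2\varepsilon+\delta$. You instead build a single admissible metric on $X\sqcup Y$ directly, and your checks go through: the mixed triangle inequality with two points of $Y$ needs $d_Y(y_i,y_j)\le d_X(x_i,x_j)+\delta$, the one with two points of $X$ needs the reverse inequality, and in each case the deficit $\delta$ is exactly covered by the two additive constants $\delta/2$; the remaining mixed case is immediate. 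What your approach buys is self-containedness --- you never invoke the triangle inequality for $d_{GH}$, which itself requires a gluing argument --- and a sharper constant, $d_{GH}(X,Y)\le\varepsilon+\delta/2$, which is what one would also get from the correspondence sending $x$ to $y_{i(x)}$ for a nearest net point $x_{i(x)}$ (distortion at most $2\varepsilon+\delta$). The cited argument is shorter given the ambient machinery but only yields $2\varepsilon+\delta/2$; both give the stated strict bound $d_{GH}(X,Y)<2\varepsilon+\delta$.
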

Now, we review Wasserstein spaces and optimal transport. The standard references for them are \cite{Villani03,Villani09}. Let $(X,d)$ be a metric space. For every $p\geq 1$, we denote by $P_p(X)$ the set of all probability Borel measures $\mu$ satisfying that there exists for some (and therefore for every) $x_0\in X$ such that $\int_Xd^p(x,x_0)d\mu(x)<\infty$. It is clear that if $X$ is bounded then $P_p(X)$ coincides with $P(X)$, the set of all probability Borel measures of $X$. For every probability Borel measures $\mu,\nu$ on $X$, we denote by $\prod(\mu,\nu)$ the set of all probability Borel measures on $X\times X$ with marginals $\mu$ and $\nu$. This means that $\pi\in \prod(\mu,\nu)$ if and only if $\pi$ is a Borel probability measure satisfying 
$$\pi(A\times X)=\mu(A), \pi(X\times B)=\nu(B),$$  for every Borel subsets $A,B$ of $X$.

 For every $p> 0$,  every $\mu,\nu\in P_p(X)$, and $\pi\in \prod(\mu,\nu)$, we define $$I_p(\pi)=\int_{X\times X}d^p(x_1,x_2)d\pi(x_1,x_2),$$
and then define the map $W_p$ on $P_p(X)\times P_p(X)$ by 
$W_p(\mu,\nu):=\cT_p^{1/p}(\mu,\nu)$, where $\cT_p(\mu,\nu):=\inf_{\pi\in \prod(\mu,\nu)}I_p(\pi)$ for every $\mu,\nu\in P_p(X).$ The map $W_p$ defines a metric on $P_p(X)$ \cite[Theorem 7.3]{Villani03}. If $X$ is compact then $P_p(X)$ is also compact \cite[Remark 6.19]{Villani09}.

For every $\mu,\nu\in P_p(X)$, we denote by $\Opt_p(\mu,\nu)$ the set of all $\pi_0\in \prod(\mu,\nu)$ such that $I_p(\pi_0)=\cT_p(\mu,\nu)$. 
If $X$ is a Polish space endowed with a metric $d$, i.e. the space $(X,d)$ is complete and separable, then $\Opt_p(\mu,\nu)\neq \varnothing$ for every $\mu,\nu\in P_p(X)$ \cite[Theorem 1.3]{Villani03}. 

Let $(X,d_X)$ and $(Y,d_Y)$ be metric spaces and $\varphi:X\to Y$ be a Borel map. Then we have the induced map $\varphi_*:P(X)\to P(Y),\mu\mapsto \varphi_*\mu$, where $ \varphi_*\mu(A):=\mu(\varphi^{-1}(A))$, for every Borel subset $A$ of $Y$. 

The following observation would be a basic fact in optimal transport, however we have not found a reference. Therefore, we give a simple proof for completeness.
\begin{lem}
\label{L-contraction of Wasserstein distance}
Let $(X_1,d_1)$ and $(X_2,d_2)$ be compact metric spaces and $f,g:X_1\to X_2$ be measurable maps. Then for every $p\geq 1$, every $\mu\in P(X_1)$, we have
$$W^p_p(f_*\mu,g_*\mu)\leq \int_{X_1}d_{X_2}^p(f(x_1),g(x_1))d\mu(x_1).$$
\end{lem}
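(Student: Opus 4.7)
The plan is to produce an explicit coupling $\pi$ of $f_*\mu$ and $g_*\mu$ whose transport cost equals the right-hand side; since $W_p^p$ is defined as the infimum of $I_p$ over all couplings, the desired inequality then follows at once.

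Concretely, I would consider the map $\Phi \colon X_1 \to X_2 \times X_2$ defined by $\Phi(x) := (f(x), g(x))$. Measurability of $\Phi$ is inherited from that of $f$ and $g$, since the product Borel $\sigma$-algebra on $X_2 \times X_2$ is generated by measurable rectangles. Set $\pi := \Phi_* \mu$. To verify that $\pi$ belongs to $\prod(f_*\mu, g_*\mu)$, I would check the marginals: for every Borel subset $A \subseteq X_2$,
$$\pi(A \times X_2) = \mu(\Phi^{-1}(A \times X_2)) = \mu(f^{-1}(A)) = (f_*\mu)(A),$$
and analogously $\pi(X_2 \times A) = (g_*\mu)(A)$, as required.

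Next, applying the change-of-variables formula for pushforward measures to the cost function $(y_1,y_2) \mapsto d_{X_2}^p(y_1,y_2)$, one obtains
$$I_p(\pi) = \int_{X_2 \times X_2} d_{X_2}^p(y_1,y_2)\, d\pi(y_1,y_2) = \int_{X_1} d_{X_2}^p(f(x_1), g(x_1))\, d\mu(x_1).$$
Since $W_p^p(f_*\mu, g_*\mu) = \cT_p(f_*\mu, g_*\mu) \leq I_p(\pi)$ by definition of the infimum, the conclusion follows.

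There is no substantial obstacle here; the entire content is recognizing the correct candidate. The key observation is that the \emph{synchronous} plan $(f,g)_* \mu$, which transports mass from $f(x_1)$ to $g(x_1)$ along each fiber rather than searching over all rearrangements, serves as a natural competitor against the optimal plan. Compactness of $X_2$ ensures finiteness of all integrals and that $\pi$ lies in $P_p(X_2 \times X_2)$, so no further regularity assumptions are needed.
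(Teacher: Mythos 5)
Your proposal is correct and coincides with the paper's own argument: both take the coupling $\pi=(f\times g)_*\mu$ (the pushforward of $\mu$ under $x\mapsto(f(x),g(x))$), verify it lies in $\prod(f_*\mu,g_*\mu)$, and apply the change-of-variables formula with the cost $d_{X_2}^p$ before invoking the infimum defining $W_p^p$. No differences worth noting.
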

\begin{proof}
Let $\pi:=(f\times g)_*\mu$ be the Borel probability measure on $X_2\times X_2$ defined by $$\pi(A\times B):=\mu(f^{-1}(A)\cap g^{-1}(B)),$$ for every Borel sets $A,B\subset X_2$. Then $\pi\in \prod(f_*\mu,g_*\mu)$ and for every non-negative measurable function $\zeta$ on $X_2\times X_2$, one has 
$$\int_{X_2\times X_2}\zeta(x_2,y_2)d\pi(x_2,y_2)=\int_{X_1}\zeta(f(x_1), g(x_1))d\mu(x_1).$$
Choose $\zeta=d_{X_2}^p$ we get the result.
\end{proof}
\section{Equivariant Gromov-Hausdorff distances for group actions}

\subsection{Equivariant Gromov-Hausdorff distances for actions of a topological group $G$}\hfill

For a topological group $G$ and a metric space $(X,d)$, we denote by $\Act(G,X)$ the space of all actions of $G$ on $(X,d)$. Before defining equivariant GH-distances, we recall the $C^0$ distance between two maps $f,g:(X,d)\to (X,d)$ as follows
$$d_{sup}(f,g):=\sup_{x\in X}d(f(x),g(x)).$$

Let $\alpha$ and  $\beta$ be actions of $G$ on metric spaces $(X,d_X)$ and $(Y,d_Y)$ respectively. Let $S$ be a generating set of $G$ and let $\varepsilon>0$. When referring to a map $f:X\to Y$ satisfying suitable properties related to the given actions of $G$, by abuse of notion we shall write $f:G\curvearrowright X\to G\curvearrowright Y$. We then say that a map $f:G\curvearrowright X\to G\curvearrowright Y$ is an $(\varepsilon,S)$-GH approximation (GHA) from $\alpha$ to $\beta$ if it is an $\varepsilon$-isometry satisfying that 
$d_{sup}(\beta_s\circ f,f\circ \alpha_s)\leq\varepsilon$ for every $s\in S$. If $f$ is furthermore Borel we say that $f$ is an $(\varepsilon,S)$-measurable GHA.
\begin{definition}
\label{D-GH distance for one group}
Let $\alpha$ and  $\beta$ be actions of $G$ on metric spaces $(X,d_X)$ and $(Y,d_Y)$ respectively. Let $S$ be a generating set of $G$. The equivariant GH-distances $d_{GH,S}$ and $d_{mGH,S}$ between $\alpha$ and $\beta$ with respect to $S$ are defined by 
\begin{align*}
d_{GH,S}(\alpha,\beta):=\inf\{\varepsilon>0: \mbox{ }&\exists (\varepsilon,S)-\mbox{ GHAs } f:G\curvearrowright X\to G\curvearrowright Y \\
&\mbox{ and } g:G\curvearrowright Y\to G\curvearrowright X \},
\end{align*}
\begin{align*}
d_{mGH,S}(\alpha,\beta):=\inf\{\varepsilon>0: \mbox{ }&\exists \varepsilon-\mbox{measurable GHAs } f:G\curvearrowright X\to G\curvearrowright Y, \\
&\mbox{ and } g:G\curvearrowright Y\to G\curvearrowright X \},
\end{align*}
if the above inf exist with the usual convention that $\inf \varnothing=+\infty$.
\end{definition} 
The definition of $d_{GH,S}$ was introduced by Abrieto and Morales \cite{AM} when $G$ is the semigroup $\N$ and $S=\{1\}$. After that, it has been extended for actions of a finitely generated group $G$ with $S$ is a finite generating set of $G$ in \cite{DLM} and \cite{KDD}. Note that when $S=G$, the definition of $d_{GH,S}(\alpha,\beta)$ coincides with the \cite[Definition 6.8]{Fu90}.
If $S=G$ we will write $d_{GH}(\alpha,\beta)$ and $d_{mGH}(\alpha,\beta)$ instead of $d_{GH,G}(\alpha,\beta)$ and $d_{mGH,G}(\alpha,\beta)$, respectively; and we also write $\varepsilon$-GHA for $(\varepsilon,G)$-GHA.
\begin{remark}
We shall see later in Lemma \ref{L-measurable eGHA} that for certain actions, given $\varepsilon>0$, we can replace an $\varepsilon$-GHA by a $D(\varepsilon)$-measurable GHA, where $D(\varepsilon)\to 0$ as $\varepsilon\to 0$. Therefore, in such cases, convergences in $d_{GH,S}$ and $d_{mGH,S}$ are the same.
\end{remark}
For $\alpha\in \Act(G,X)$ and $\beta\in \Act(G,Y)$, we say that $\alpha$ and $\beta$ are isometric if there exists an isometry $f:X\to Y$ such that $f\circ \alpha_g=\beta_g\circ f$ for every $g\in G$. 

Similar to \cite[Theorem 1]{AM} for the map case, here are basic properties of $d_{GH,S}$. 
\begin{lem}
\label{L-quasi GH metric on isometry classes of compact metric actions}
Let $\alpha$, $\beta$ and $\gamma$ be actions of a topological group $G$ on metric spaces $(X,d_X)$, $(Y,d_Y)$ and $(Z,d_Z)$ respectively. Let $S$ be a finitely generating set of $G$. The map $d_{GH,S}$ satisfies the following properties
\begin{enumerate}
\item $d_{GH,S}(\alpha,\beta)\geq 0$ and $d_{GH,S}(\alpha,\beta)=d_{GH,S}(\beta,\alpha)$;
\item $\hat{d}_{GH}(X,Y)\leq d_{GH,S}(\alpha,\beta)$ and $\hat{d}_{GH}(X,Y)=d_{GH,S}(\alpha^0,\beta^0)$, where $\alpha^0$ and $\beta^0$ are trivial actions of $G$ on $X$ and $Y$ respectively;
\item If $X=Y$ then $d_{GH,S}(\alpha,\beta)\leq d_S(\alpha,\beta)$, where $d_S(\alpha,\beta):=\sup_{s\in S,x\in X}d_X(\alpha_sx,\beta_sx)$;
\item If $X$ and $Y$ are bounded then $d_{GH,S}(\alpha,\beta)<\infty$;
\item $d_{GH,S}(\alpha,\beta)\leq 2(d_{GH,S}(\alpha,\gamma)+d_{GH,S}(\gamma,\beta));$
\item If $S$ is symmetric, i.e. $S=S^{-1}$, and $X,Y$ are compact then $d_{GH,S}(\alpha,\beta)=0$ if and only if $\alpha$ is isometric to $\beta$.
\end{enumerate}
\end{lem}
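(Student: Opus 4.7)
Parts (1)--(4) will follow essentially by unpacking Definition \ref{D-GH distance for one group}. Symmetry in (1) is built into the definition, which requires GHAs in both directions, and non-negativity is by convention. For (2), every $(\varepsilon,S)$-GHA is in particular an $\varepsilon$-GH approximation of the underlying spaces, so the infimum defining $d_{GH,S}$ dominates the one defining $\hat{d}_{GH}$; when both actions are trivial the equivariance condition $d_{sup}(\beta_s\circ f,f\circ\alpha_s)\leq\varepsilon$ is vacuous, giving equality. For (3), the identity map $\mathrm{id}_X$ is a $0$-isometry, so the only content is the equivariance term $d_{sup}(\beta_s,\alpha_s)\leq d_S(\alpha,\beta)$. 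For (4), when $X$ and $Y$ are bounded one can pick any pair of maps $f:X\to Y$ and $g:Y\to X$ and choose $\varepsilon>\max(\diam X,\diam Y)$; then the isometry, net, and equivariance bounds all hold trivially.

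The only genuine quantitative computation is (5). Given $(\varepsilon_1,S)$-GHAs $f_1:X\to Z$, $g_1:Z\to X$ between $\alpha$ and $\gamma$, and $(\varepsilon_2,S)$-GHAs $f_2:Z\to Y$, $g_2:Y\to Z$ between $\gamma$ and $\beta$, I will check that $f_2\circ f_1$ is an $(\varepsilon_1+2\varepsilon_2,S)$-GHA from $\alpha$ to $\beta$. The isometry bound is additive; the net condition picks up an extra $\varepsilon_2$ because passing from $f_2(Z)$ to $f_2(f_1(X))$ costs one use of the $\varepsilon_2$-isometric property of $f_2$; and the equivariance bound $d_Y(\beta_s f_2 f_1(x),\, f_2 f_1 \alpha_s(x))\leq \varepsilon_2+(\varepsilon_1+\varepsilon_2)$ is obtained by first applying $\varepsilon_2$-equivariance of $f_2$ at the point $f_1(x)$ and then transporting the $\varepsilon_1$-equivariance of $f_1$ through the $\varepsilon_2$-isometric property of $f_2$. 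Symmetrically $g_1\circ g_2$ gives an $(\varepsilon_2+2\varepsilon_1,S)$-GHA from $\beta$ to $\alpha$. Taking $\varepsilon_i$ arbitrarily close to the respective distances and using the elementary inequality $\max(a+2b,b+2a)\leq 2(a+b)$ yields the factor-$2$ quasi-triangle inequality.

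The substantive item is (6). The ``if'' direction is immediate since an equivariant isometry is a $0$-GHA in both directions. For ``only if,'' I take sequences $f_n:X\to Y$ of $(\varepsilon_n,S)$-GHAs with $\varepsilon_n\to 0$; fixing a countable dense subset $D\subset X$ and using compactness of $Y$, a standard diagonal extraction yields a subsequence along which $f_n(x)$ converges for each $x\in D$. The $\varepsilon_n$-isometric property passes to the limit, so the pointwise limit defines an isometric map $D\to Y$ that extends by uniform continuity to an isometric $f:X\to Y$. The approximate-surjectivity condition combined with compactness of $Y$ forces $f(X)=Y$. Finally, $d_{sup}(\beta_s\circ f_n,f_n\circ \alpha_s)\leq \varepsilon_n\to 0$ passes to the limit on $D$ and extends by continuity, giving $\beta_s\circ f=f\circ \alpha_s$ for every $s\in S$; since $S$ is symmetric and generating, this relation propagates to all of $G$.

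The main obstacle will be controlling the equivariance in the diagonal-limit step of (6): one must arrange that a single subsequence produces an $f$ that is simultaneously an isometry, a bijection, and compatible with every generator $s\in S$. Compactness of both $X$ and $Y$ is essential, and the hypothesis $S=S^{-1}$ is what guarantees that equivariance for $S$ in the limit forces equivariance for the whole group, rather than only for the sub-semigroup generated by $S$.
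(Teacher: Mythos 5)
Your proposal is correct and follows essentially the same route as the paper: parts (1)--(4) by unwinding the definitions, part (5) by composing approximations with the $\max(\varepsilon_1+2\varepsilon_2,\,\varepsilon_2+2\varepsilon_1)\leq 2(\varepsilon_1+\varepsilon_2)$ bookkeeping, and part (6) by the diagonal extraction on a countable dense subset, passing the isometric, surjectivity, and equivariance properties to the limit and then propagating from the symmetric generating set $S$ to all of $G$. The only difference is that for (5) the paper simply cites \cite[Theorem 4.1]{KDD} while you carry out the composition estimate explicitly; your computation is correct and self-contained.
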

\begin{proof}
(1), (2), (3) and (4) are clear from the definitions.

(5) If one of $d_{GH,S}(\alpha,\gamma), d_{GH,S}(\gamma,\beta)$ is $\infty$ then we are done. Now we assume that $d_{GH,S}(\alpha,\gamma)<\infty$ and $d_{GH,S}(\gamma,\beta)<\infty$.  This case is proved in \cite[Theorem 4.1]{KDD}.

(6) Suppose that there exists an isometry $f:X\to Y$ such that $f\circ \alpha_g=\beta_g\circ f$ for every $g\in G$. Then $f^{-1}:Y\to X$ is also an isometry and therefore for every $\varepsilon>0$ $f,f^{-1}$ are $\varepsilon$-isometries satisfying $d_{sup}(\alpha_g\circ f^{-1},f^{-1}\circ \beta_g)=d_{sup}(f\circ \alpha_g,\beta_g\circ f)=0$ for all $g\in G$. Thus, $d_{GH,S}(\alpha,\beta)=0$.

Now suppose that $d_{GH,S}(\alpha,\beta)=0$. Then there exists a sequence of $\frac{1}{n}$-isometries $f_n:X\to Y$ and $g_n:Y\to X$ such that for every $n\in \N$,
$$\sup_{s\in S}\{d_{sup}(\alpha_s\circ g_n,g_n\circ \beta_s),d_{sup}(\beta_s\circ f_n,f_n\circ \alpha_s)\}\leq\frac{1}{n}.$$

 As $X$ is separable we can find a countable dense subset $A=\{a_n\}$ of $X$. Since $Y$ is compact, there exists a subsequence $\{f_{n_1}\}$ of $\{f_n\}$ such that $f_{n_1}(a_1)$ converges to $f(a_1)\in Y$, and we can assume that $d_Y(f_{n_1}(a_1),f(a_1))<1$ for every $n_1$. Similarly for $a_2$ and $\{f_{n_1}\}$ we can get a subsequence $f_{n_2}(a_2)$ converging to $f(a_2)\in Y$ and $d_Y(f_{n_2}(a_2),f(a_2))<1/2$ for every $n_2$. Repeating this process we get a subsequence of $\{f_n\}$ which we still denoted by $\{f_n\}$ such that for every $i\in \N$, $f(a_i):=\lim_{n\to \infty}f_{n}(a_i)$ and $d_Y(f_n(a_i),f(a_i))<1/i$ for every $n$. Therefore, we have the map $f:A\to Y$ defined by $f(a):=\lim_{n\to \infty}f_{n}(a)$ for every $a\in A$ , and furthermore $f_n\to f$ uniformly. On the other hand, for every $a_i,a_j\in A$ and $n\in \N$, one has 
 $$d_X(a_i,a_j)-\frac{1}{n}\leq d_Y(f_{n}(a_i),f_{n}(a_j))\leq d_X(a_i,a_j)+\frac{1}{n}.$$
 
 Hence, we get $d_X(a_i,a_j)=d_Y(f(a_i),f(a_j))$ for every $a_i,a_j\in A$. Let $x\in X$. As $A$ is dense in $X$, there exists a sequence $\{x_n\}$ in $A$ such that $x_n$ converges to $x$. It is a Cauchy sequence and hence $\{f(x_n)\}$ is also a Cauchy sequence and therefore has a limit $f(x):=y$.
Then the extension map $f:X\to Y$ also satisfies $d_X(x_1,x_2)=d_Y(f(x_1),f(x_2))$, for every $x_1,x_2\in X$. As $f_n$ is an $\frac{1}{n}$-isometry for every $n$, we obtain that $f$ is onto and therefore $f$ is an isometry.

 Next we will prove that $\beta_g\circ f=f\circ \alpha_g$ for every $g\in G$. As $S$ is a symmetric generating set of $G$, it suffices to prove that $\beta_s\circ f=f\circ \alpha_s$ for every $s\in S$. Fix $s\in S$. Since $f_n\to f$ uniformly on $A$ and $f_n$ is an $\frac{1}{n}$-isometry for every $n\in \N$, one has $\lim_{n\to \infty}f_{n}(x)=f(x)$ for every $x\in X$. Therefore from $d_Y(\beta_s\circ f_{n}(x),f_{n}\circ \alpha_s(x))\leq\frac{1}{n}$ for every $n\in \N,x\in X$, we get $d_Y(\beta_s\circ f(x),f\circ \alpha_s(x))=0$. Hence $\beta_s\circ f=f\circ \alpha_s$.
 \end{proof}
 \begin{remark}
 After this paper has been finished, I received the preprint \cite{DLM} in which a similar result of Lemma \ref{L-quasi GH metric on isometry classes of compact metric actions} (6) also has been proved for the case $G$ is a finitely generated group. 
 \end{remark}
\begin{lem}
\label{L-closedness of isometric actions}
Let $G$ be a topological group and let $S$ be a symmetric generating set of $G$. Assume that for each $n$ we have an isometric action of $G$ on  a metric space $(X_n,d_{X_n})$. Assume further that there exists an action of $G$ on a metric space $(X,d)$ such that 
 $d_{GH,S}(\alpha_n,\alpha)\to 0$ as $n\to \infty$. Then $\alpha$ is an isometric action.
\end{lem}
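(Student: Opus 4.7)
The plan is to prove that for every $g\in G$ and every $x,y\in X$, we have $d(\alpha_g x,\alpha_g y)=d(x,y)$, by transferring the isometry of the $(\alpha_n)_g$'s through the GH approximations and letting $n\to\infty$.

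First I would unwind the hypothesis: since $d_{GH,S}(\alpha_n,\alpha)\to 0$ there exist sequences of $\varepsilon_n$-GHAs $f_n\colon X\to X_n$ and $h_n\colon X_n\to X$ with $\varepsilon_n\to 0$ satisfying, for every $s\in S$,
\[
d_{sup}\bigl((\alpha_n)_s\circ f_n,\;f_n\circ\alpha_s\bigr)\le\varepsilon_n,\qquad d_{sup}\bigl(\alpha_s\circ h_n,\;h_n\circ(\alpha_n)_s\bigr)\le\varepsilon_n.
\]
Because $S$ is a symmetric generating set, any $g\in G$ can be written as $g=s_1 s_2\cdots s_k$ with $s_i\in S$ and $k=k(g)$ fixed. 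The key observation is that each $(\alpha_n)_s$ is an isometry of $X_n$, so any composition of them is again an isometry of $X_n$.

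Next I would run a telescoping argument to compare $f_n\circ\alpha_g$ with $(\alpha_n)_g\circ f_n$. Writing $g_i=s_i s_{i+1}\cdots s_k$ (with $g_{k+1}=e$), for each $x\in X$,
\begin{align*}
d_{X_n}\!\bigl(f_n(\alpha_g x),(\alpha_n)_g f_n(x)\bigr)
&\le \sum_{i=1}^{k} d_{X_n}\!\Bigl((\alpha_n)_{s_1\cdots s_{i-1}}f_n(\alpha_{g_i}x),\,(\alpha_n)_{s_1\cdots s_i}f_n(\alpha_{g_{i+1}}x)\Bigr)\\
&=\sum_{i=1}^{k} d_{X_n}\!\Bigl(f_n(\alpha_{g_i}x),\,(\alpha_n)_{s_i}f_n(\alpha_{g_{i+1}}x)\Bigr)\le k\varepsilon_n,
\end{align*}
where the equality uses that $(\alpha_n)_{s_1\cdots s_{i-1}}$ is an isometry of $X_n$, and the last inequality uses the near-equivariance bound at the generator $s_i$.

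Combining this with the $\varepsilon_n$-isometry property of $f_n$ and the exact isometry of $(\alpha_n)_g$, for any $x,y\in X$ I would estimate
\begin{align*}
d(\alpha_g x,\alpha_g y)
&\le d_{X_n}\!\bigl(f_n(\alpha_g x),f_n(\alpha_g y)\bigr)+\varepsilon_n\\
&\le d_{X_n}\!\bigl((\alpha_n)_g f_n(x),(\alpha_n)_g f_n(y)\bigr)+(2k+1)\varepsilon_n\\
&= d_{X_n}\!\bigl(f_n(x),f_n(y)\bigr)+(2k+1)\varepsilon_n\le d(x,y)+(2k+2)\varepsilon_n,
\end{align*}
and analogously $d(x,y)\le d(\alpha_g x,\alpha_g y)+(2k+2)\varepsilon_n$. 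Letting $n\to\infty$ (with $k$ fixed depending only on $g$) yields $d(\alpha_g x,\alpha_g y)=d(x,y)$, so $\alpha_g$ is an isometry.

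The only mild subtlety is bookkeeping in the telescoping step, which rests crucially on the fact that a composition of isometries is an isometry—this is what allows the near-equivariance error to stay linear in the word length $k$ rather than blowing up. No additional assumption on $X$ or compactness of the $X_n$ is needed for this argument.
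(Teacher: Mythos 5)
Your proof is correct. The core estimate --- transferring $d(\alpha_s x,\alpha_s y)$ through the near-equivariant almost-isometry $f_n$ and exploiting that the maps $(\alpha_n)_s$ are exact isometries --- is the same one the paper uses, but you deploy it differently. The paper applies it only at a single generator $s\in S$, concluding that each $\alpha_s$ is $1$-Lipschitz, and then invokes the symmetry of $S$: both $\alpha_s$ and $\alpha_{s^{-1}}$ are contracting, and since they are mutually inverse each must be an isometry (whence so is every $\alpha_g$, being a composition of these). You instead fix a word $g=s_1\cdots s_k$ and telescope along it, obtaining the two-sided bound $|d(\alpha_g x,\alpha_g y)-d(x,y)|\le (2k+2)\varepsilon_n$ directly for each $g$; your bookkeeping checks out (the endpoints and consecutive terms of the telescoping sum match, and the word length $k$ is fixed before letting $n\to\infty$). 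Both routes need $S$ symmetric --- the paper for the contracting-both-ways trick, you in order to write $g$ as a positive word in $S$ --- and neither needs compactness. The paper's version is shorter and avoids the word-length-dependent constant; yours has the mild advantage of yielding an explicit quantitative rate for each fixed $g$ without the final step that a $1$-Lipschitz map with $1$-Lipschitz inverse is an isometry.
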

\begin{proof}
From the assumptions, there exist a sequence $\varepsilon_n\to 0$ and $\varepsilon_n$-isometries $f_n:X\to X_n$ and $g_n:X_n\to X$ such that 
$$\sup_{s\in S}\{d_{sup}(\alpha_{n,s}\circ f_n,f_n\circ \alpha_s),d_{sup}(g_n\circ \alpha_{n,s},\alpha_s\circ g_n)\}<\varepsilon_n, \mbox{ } \forall n\in\N.$$
Then for every $s\in S$, $x_1,x_2\in X$, $n\in \N$, one has 
\begin{eqnarray*}
d_X(\alpha_s(x_1),\alpha_s(x_2))&\leq& \varepsilon_n+d_{X_n}(f_n\circ \alpha_s(x_1),f_n\circ \alpha_s(x_2))\\
&\leq& \varepsilon_n+d_{X_n}(f_n\circ \alpha_s(x_1),\alpha_{n,s}\circ f_n(x_1))+\\
&& d_{X_n}(\alpha_{n,s}\circ f_n(x_1),\alpha_{n,s}\circ f_n(x_2))+\\
&&d_{X_n}(\alpha_{n,s}\circ f_n(x_2),f_n\circ \alpha_s(x_2))\\
&\leq&4\varepsilon_n+d_X(x_1,x_2).
\end{eqnarray*}
Therefore, for every $s\in S$, $x_1,x_2\in X$, $d_X(\alpha_s(x_1),\alpha_s(x_2))\leq d_X(x_1,x_2)$. This means that for every $s\in S$, the map $\alpha_s$ is contracting. As $S$ is symmetric we obtain that for every $s\in S$, both $\alpha_s$ and $\alpha_{s^{-1}}$ are contracting and then they are isometries.
\end{proof}
Now let $G$ be a countable group. We recall the definitions of expansive actions and the pseudo-orbit tracing property (or shadowing property).
Let $\alpha$ be a continuous action of $G$ on a metric space $(X,d)$. 
\begin{definition}
For a subset $S$ of $G$ and $\delta>0$, a $(\delta,S) \mbox{ pseudo-orbit}$ of $\alpha$ is a sequence $\{x_g\}_{g\in G}$ such that $d(\alpha_s(x_g),x_{sg})<\delta$ for every $s\in S,g\in G$. 
\end{definition}
\begin{definition}
\label{D-POTP} (\cite[Definition 2.2]{Mey})
The action $\alpha$ has the \textit{pseudo-orbit tracing property} (POTP) if for every $\varepsilon>0$ there exist $\delta>0$ and a finite subset $S$ of $G$ such that every $(\delta,S)$ pseudo-orbit $\{x_g\}_{g\in G}$ is $\varepsilon$-traced by some point $x\in X$, i.e. $d(\alpha_g(x),x_g)<\varepsilon$ for every $g\in G$. 
\end{definition}
POTP was introduced firstly by Rufus Bowen \cite{Bowen75,Bowen75b} when $G=\Z$, and has been extended for $G=\Z^d$ \cite{PT,Oprocha} and in the case $G$ is a finitely generated group \cite{OT}. Note that the definition of POTP in \cite{OT} is a special case of Definition \ref{D-POTP} because if $G$ is generated by a finite set $S$ then $\alpha\in \Act(G,X)$ has POTP (with respect to S) in the sense of \cite{OT} if for every $\varepsilon>0$ there exists $\delta>0$ such that every $(\delta,S)$ pseudo-orbit $\{x_g\}_{g\in G}$ is $\varepsilon$-traced by some point $x\in X$. 
\begin{definition}
\label{D-expansivity}
The action $\alpha\in \Act(G,X)$ is \textit{expansive} if there exists an expansive constant $c>0$ such that for every $x\neq y\in X$, $\sup_{g\in G}d(\alpha_gx,\alpha_gy)>c$.
\end{definition}  
\begin{remark}
\label{R-unique traced of pseudo-orbit}
Let $\alpha$ be an expansive action of a countable group $G$ on a metric space $(X,d)$ with an expansive constant $c$. Let $\varepsilon<c/2$ and $\delta, S$ corresponds to $\varepsilon$ as in Definition \ref{D-POTP}. Then every $(\delta,S)$ pseudo-orbit of $\alpha$ is $\varepsilon$-traced by a unique point in $X$. 
\end{remark}
\begin{definition}
An action $\alpha$ of a topological group $G$ on a metric space $X$ is \textit{GH-stable} if for every $\varepsilon>0$, there is $\delta>0$ such that for every continuous action $\beta$ of $G$ on a metric space $Y$ with $d_{GH}(\alpha,\beta)<\delta$, there is a continuous $\varepsilon$-isometry $h:Y\to X$ such that $\alpha_g\circ h=h\circ \beta_g$ for every $g\in G$.
\end{definition}
 The first result of topological stability for maps which are expansive and have POTP was established by Walters in \cite{Walters78}. And in \cite{ChungLee}, Chung and Lee proved a group action version of Walters' topological stability result. On the other hand, in 2017, Arbieto and Morales used techniques of \cite{Walters78} to establish stability under GH-topology for expansive maps having pseudo-orbit tracing property \cite{AM}. After that, combining ideas of \cite{AM} and \cite{ChungLee}, a version of GH-stability for such actions of a finitely generated group $G$ has been established in \cite{DLM} and \cite{KDD}. Following the proofs of \cite{ChungLee} and \cite[Theorem 4.6]{KDD}, we can see that the finitely generating condition of $G$ is not necessary.
\begin{thm}
\label{T-expansive and POTP implies GH-stability for one group}
If an action $\alpha$ of a countable group $G$ on a proper metric space $(X,d)$ is expansive and satisfies POTP then it is topologically GH-stable with respect to $S$. More precisely,
\begin{enumerate}
\item if $c>0$ is an expansive constant of $\alpha$ then for every $0<\varepsilon<c$ there exists $\delta>0$ such that if $\beta$ is another continuous action of $G$ on a metric space $(Y,d_Y)$ with $d_{GH,S}(\alpha,\beta)<\delta$ then there exists an $\varepsilon$-isometry $h:Y\to X$ satisfying $\alpha_g\circ h=h\circ \beta_g$ for every $g\in G$. 
\item if furthermore $Y$ is compact then the $\varepsilon$-isometry $h$ can be chosen to be continuous.
\end{enumerate} 
\end{thm}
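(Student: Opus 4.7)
The plan is to transpose the Walters-style topological-stability argument of \cite{ChungLee} and \cite[Theorem 4.6]{KDD} to the current setting; the only point to verify is that the countability (rather than finite generation) of $G$ causes no trouble. Fix $0<\varepsilon<c$; after replacing $\varepsilon$ by $\min\{\varepsilon,c/3\}$ if needed, we invoke Remark \ref{R-unique traced of pseudo-orbit} to obtain a finite set $S_0\subset G$ and $\delta_0>0$ such that every $(\delta_0,S_0)$-pseudo-orbit of $\alpha$ is $\varepsilon$-traced by a \emph{unique} point of $X$. Each of the finitely many elements of $S_0$ can be written as a word of some finite length in $S\cup S^{-1}$; let $K$ be the maximum such length. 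Since $X$ is proper, closed balls are compact, so the finitely many maps $\alpha_s$ (with $s\in S\cup S^{-1}$ appearing in those words) are uniformly continuous on each such ball. Choose $\delta>0$, depending on $\delta_0$, $K$ and these moduli of continuity, so small that whenever $d_{GH,S}(\alpha,\beta)<\delta$ and $\varphi\colon Y\to X$ is an accompanying $\delta$-GHA satisfying $d_{sup}(\varphi\circ\beta_s,\alpha_s\circ\varphi)\le\delta$ for $s\in S$, the telescoping estimate along words of length $\le K$ yields $d(\varphi(\beta_s y),\alpha_s(\varphi(y)))<\delta_0$ for every $s\in S_0$ and every $y\in Y$.

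With this in place, define $h\colon Y\to X$ by declaring $h(y)$ to be the unique $\varepsilon$-tracer of the $(\delta_0,S_0)$-pseudo-orbit $\xi_y(t):=\varphi(\beta_t y)$, $t\in G$. Equivariance $\alpha_g\circ h=h\circ\beta_g$ is a direct consequence of uniqueness: for a fixed $g\in G$, the shifted sequence $t\mapsto\xi_y(tg)=\varphi(\beta_{tg}y)=\xi_{\beta_g y}(t)$ is $\varepsilon$-traced both by $\alpha_g(h(y))$ and by $h(\beta_g y)$, so these points coincide. Setting $t=e$ in the tracing inequality gives $d(h(y),\varphi(y))\le\varepsilon$, and combining this with the $\delta$-isometry property of $\varphi$ and the $\delta$-denseness of $\varphi(Y)$ in $X$ yields, by a routine triangle-inequality estimate, that $h$ is an $(\varepsilon+O(\delta))$-isometry; shrinking $\delta$ absorbs the excess into the prescribed $\varepsilon$.

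For part (2), assume $Y$ is compact. Then $\varphi(Y)$, and hence $h(Y)$, lies in a bounded, and by properness relatively compact, subset of $X$. Given $y_n\to y$ in $Y$, extract a subsequence $\{y_{n_k}\}$ along which $h(y_{n_k})\to z$ for some $z\in X$. For each $t\in G$, continuity of $\alpha_t$ and $\beta_t$ gives $\alpha_t(h(y_{n_k}))\to\alpha_t(z)$ and $\beta_t y_{n_k}\to\beta_t y$; and since $\varphi$ is a $\delta$-isometry, $d(\varphi(\beta_t y_{n_k}),\varphi(\beta_t y))\le d(\beta_t y_{n_k},\beta_t y)+\delta$, so the limit points of $\{\varphi(\beta_t y_{n_k})\}$ lie within $\delta$ of $\varphi(\beta_t y)$. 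Combining with $d(\alpha_t(h(y_{n_k})),\varphi(\beta_t y_{n_k}))\le\varepsilon$, one obtains $d(\alpha_t(z),\varphi(\beta_t y))\le\varepsilon+\delta$, so $z$ is an $(\varepsilon+\delta)$-tracer of $\xi_y$. If $\delta$ was chosen with $\varepsilon+\delta<c/2$, uniqueness of tracing forces $z=h(y)$, hence $h(y_n)\to h(y)$ and $h$ is continuous. The main technical subtlety is the telescoping iteration in the first paragraph: properness of $X$ supplies the uniform continuity of the relevant $\alpha_s$ on the bounded region the $\varphi$-image visits, and only finitely many word-lengths actually need to be bounded since $S_0$ is finite---which is precisely why the finite-generation hypothesis on $G$ can be dispensed with.
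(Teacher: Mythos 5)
Your core construction --- the unique $\varepsilon$-tracing of the pseudo-orbit $t\mapsto\varphi(\beta_t y)$, equivariance of $h$ via uniqueness of the tracer, and the triangle-inequality estimate for the isometry defect --- is exactly the argument the paper uses (it proves the two-group Theorem \ref{T-expansive and POTP implies GH-stability for two groups} and derives the present statement as a special case). Your continuity argument for part (2) is a legitimate alternative: the paper instead invokes Lemma \ref{L-expansiveness determines the topology} to produce a finite set $A\subset G$ on which closeness of orbits forces closeness of points, combined with uniform continuity of $\beta$ on the compact $Y$; your subsequence argument, using that $h(Y)\subset B'_{\varepsilon}(\varphi(Y))$ is relatively compact by properness and that any limit point of $h(y_{n_k})$ must trace $\xi_y$, reaches the same conclusion and is if anything more economical.

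The genuine problem is the telescoping step. In the paper the approximation furnished by $d_{GH,1}$ (equivalently by $d_{GH}=d_{GH,G}$ in the one-group case) satisfies $d_{sup}(\alpha_g\circ u,u\circ\beta_g)<\delta$ for \emph{every} $g\in G$, so the $(\delta,S_0)$-pseudo-orbit condition over the finite POTP set $S_0$ is immediate and no word-length induction is needed. You instead assume control only over a generating set $S$ and propagate it to $S_0$ by writing its elements as words in $S\cup S^{-1}$; each inductive step requires $\alpha_{s_1}$ to carry $\delta$-close points to points within roughly $\delta_0/K$, \emph{uniformly over the region actually visited by the iterates of $\varphi(Y)$}. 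You justify this by properness, asserting that this region is bounded --- but $\varphi(Y)$ is $\delta$-dense in $X$, so when $X$ is unbounded (the theorem is stated for proper, not compact, $X$, and the paper's examples include actions on $\R^n$) the region is essentially all of $X$, and a continuous map on a proper space need not be uniformly continuous there. So either you must read the hypothesis as the paper does (the GH approximation is almost equivariant on all of $G$, or at least on $S_0$), in which case the telescoping is unnecessary, or you need an extra hypothesis (compactness of $X$, or uniform continuity of the maps $\alpha_s$) to carry it out. A smaller point: you trace with constant $\min\{\varepsilon,c/3\}$, but the isometry defect of $h$ comes out to roughly twice the tracing constant plus $\delta$, so the tracing constant must be taken below $\varepsilon/4$ or so (as the paper does with its $\varepsilon_1<\varepsilon/4$); shrinking $\delta$ alone does not absorb the factor of two.
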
 
As Theorem \ref{T-expansive and POTP implies GH-stability for one group} is a special case of Theorem \ref{T-expansive and POTP implies GH-stability for two groups}, we skip its proof now. Instead, we present an example to illustrate Theorem \ref{T-expansive and POTP implies GH-stability for one group}.
\begin{example}
\label{E-GH distance} Given two metric spaces $(M,d_M),(N,d_N)$, we equip the product space $M\times N$ with the standard metric $d$ by setting $$d((x_1,x_2),(y_1,y_2)):=\sqrt{d_M^2(x_1,y_1)+d_2^N(x_2,y_2)},$$ for every $(x_1,x_2),(y_1,y_2)\in M\times N$.

 For every $r>0$ we denote $S^1_r:=\{(x_1,x_2)\in \R^2:x_1^2+x_2^2=r^2\}$. We endow $S^1_r$ with the canonical metric $d_r$, i.e. for every $s_1,s_2\in S^1_r$, $d_r(s_1,s_2)$ is the length of the smallest arc connecting $s_1$ and $s_2$. When $r=1$ we write $S^1$ instead of $S^1_1$. We denote by $d_{\R/\Z}$ the canonical metric on $\R/\Z$ defined by $d_{\R/\Z}(t + \Z, s + \Z) := \min_{m\in \Z}|t-s-m|.$
Let $X=\T^2=\R/\Z\times \R/\Z$ be the torus and $d_X$ be its canonical product metric. Put $Y=S^1_{1/n}\times S^1_{1/n}\times X$ and endows it with the product metric $d_Y$. We define the map $h:Y\to X$ by $h(s_1,s_2,x):=x$ for every $s_1,s_2\in S^1_{1/n},x\in \T^2$. Let $A,B\in M_2(\R)$ be two invertible matrices such that $AB=BA$, $A$ does not have eigenvalues of modulus 1 and $B^m\neq I_2$ for every $m\in \N$, where $I_2$ is the $2\times 2$ identity matrix. For example, we choose 
$A=\left(\begin{array}{cc} 1 & 3\\ 2 & 4 \end{array}\right), B=\left(\begin{array}{cc} -3 & 3\\ 2 & 0 \end{array}\right)$. Then the group $G$ generated by $\{A,B\}$ is isomorphic to $\Z^2$. Let $\alpha$ be the natural action of $G$ on $\T^2$ and let $\gamma$ be an arbitrary action of $G$ on $S^1_{1/n}\times S^1_{1/n}$. Let $\beta$ be the product action of $G$ on $(Y,d_Y)$ induced from the actions $\gamma$ and $\alpha$. Then we will have 
$\alpha_g\circ h=h\circ \beta_g$ for every $g\in G$. On the other hand, for every $(s_1,s_2,x_1),(t_1,t_2,x_2)\in Y$ we have
\begin{eqnarray*}
d_Y((s_1,s_2,x_1), (t_1,t_2,x_1))&=&\sqrt{d_{1/n}^2(s_1,t_1)+d_{1/n}^2(s_2,t_2)+d_X^2(x_1,x_2)}\\
&\leq & \sqrt{(\pi/n)^2+(\pi/n)^2+d_X^2(x_1,x_2)}\\
&\leq & d_X(x_1,x_2)+\frac{\sqrt{2}\pi}{n}\\
&=&d_X(h(s_1,s_2,x_1),h(t_1,t_2,x_2))+\frac{\sqrt{2}\pi}{n}.
\end{eqnarray*} 
Therefore, the map $h:Y\to X$ is $\frac{\sqrt{2}\pi}{n}$-isometric. As $h$ is also surjective we get that $h$ is a $\frac{\sqrt{2}\pi}{n}$-isometry. Fix $\bar{s}\in S^1_{1/n},$ we define the map $f:X\to Y$ by $f(x):=(\bar{s},\bar{s},x)$ for every $x\in X$. Then $f$ is isometric. In addition to, for every $(s_1,s_2,x)\in Y$, we have $d_Y((s_1,s_2,x),f(x))\leq \frac{\sqrt{2}\pi}{n}$ and hence $B'_{\frac{\sqrt{2}\pi}{n}}(f(X))=Y$. Therefore $f$ is a $\frac{\sqrt{2}\pi}{n}$-isometry. On the other hand, for every $g\in G,x\in X$ we have
\begin{eqnarray*}
d_Y(\beta_g\circ f(x),f\circ \alpha_g(x))&=&d_Y(\beta_g(\bar{s},\bar{s},x),(\bar{s},\bar{s},\alpha_g(x)))\\
&=&d_Y((\gamma_g(\bar{s},\bar{s}),\alpha_g(x)),(\bar{s},\bar{s},\alpha_g(x)))\\
&\leq & \sqrt{2\diam^2 S^1_{1/n}} = \frac{\sqrt{2}\pi}{n}.
\end{eqnarray*}
Therefore $d_{GH}(\alpha,\beta)\leq  \frac{\sqrt{2}\pi}{n}$. By the choice of $A$ we get that $\alpha_A$ is expansive and hence $\alpha$ is expansive. As $\alpha_A$ also has POTP and $G$ is nilpotent, applying \cite[Lemma 2.13]{ChungLee} we get that the action $\alpha$ has POTP.

\end{example}
\begin{definition}
Let $\alpha_n$ be a sequence of actions of a topological group $G$ on compact metric spaces $(X_n,d_n)$. We say that $\{\alpha_n\}$ is \textit{equicontinuous} if for every $\varepsilon>0$, there exists $\delta>0$ such that for every $g\in G$, $n\in \N$,  and for every $x_n,y_n\in X_n$ with $d_n(x_n,y_n)<\delta$, one has $d_n(\alpha_{n,g}(x_n),\alpha_{n,g}(y_n))\leq \varepsilon$.
\end{definition}
Following the ideas in \cite[page 66]{Gromov}, \cite[Appendix]{GP} and \cite[Lemma 11.1.9]{Petersen} we obtain a compactness result for equicontinuous actions.
\begin{lem}
Let $\{(X_n,d_n)\}$ be a sequence of metric spaces such that $d_{GH}(X_n,X)\to 0$ for some compact metric space $(X,d)$. Then we can assume there exist sequences of $1/n$-isometry maps $f_n:X_n\to X$, $h_n:X\to X_n$ such that for every $x\in X,x_n\in X_n$, one has 
 $$d(x,f_n\circ h_n(x))\leq 1/n \mbox{ and } d_n(x_n,h_n\circ f_n(x_n))\leq 1/n.$$ 
 Assume that for each $n$ we have an action $\alpha_n$ of $G$ on $X_n$. Assume further that $\{\alpha_n\}$ is equicontinuous. Then there exist a subsequence $\{X_{n_k}\}$ of $\{X_n\}$ and an action $\alpha$ of $G$ on $X$ such that 
 $\alpha_{n_k}\to \alpha $ as $n_k\to \infty$ in the sense that $$\alpha_g(x)=\lim_{n_k\to\infty}f_{n_k}\circ \alpha_{n_k,g}\circ h_{n_k}(x),$$ for every $x\in X,g\in G$.
\end{lem}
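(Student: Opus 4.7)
My plan is an Arzelà--Ascoli plus diagonal argument, transferring the actions back to $X$ via the given approximate isometries. Set
$$\beta_{n,g} := f_n \circ \alpha_{n,g} \circ h_n : X \to X.$$
The first step is to establish that $\{\beta_{n,g}\}_{n \in \N,\, g \in G}$ is an equicontinuous family on the compact space $X$. Given $\varepsilon > 0$, pick $\delta > 0$ from the equicontinuity of $\{\alpha_n\}$; then for $x,y \in X$ with $d(x,y) < \delta - 2/n$ the $1/n$-isometry property of $h_n$ gives $d_n(h_n(x),h_n(y)) < \delta$, hence $d_n(\alpha_{n,g}(h_n(x)),\alpha_{n,g}(h_n(y))) \leq \varepsilon$, and applying $f_n$ yields $d(\beta_{n,g}(x),\beta_{n,g}(y)) \leq \varepsilon + 1/n$. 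By Arzelà--Ascoli this makes the closure of $\{\beta_{n,g}\}$ a compact subset $K \subset C(X,X)$ in the uniform metric.

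For the extraction, fix a countable dense subset $\{x_i\} \subset X$ (which exists since $X$ is compact) and, in the setting of interest, a countable (dense) set $\{g_j\} \subset G$. A standard diagonal extraction on the pairs $(i,j)$ produces a subsequence $(n_k)$ such that $\beta_{n_k,g_j}(x_i)$ converges in $X$ for every $i,j$; equicontinuity upgrades this to uniform convergence $\beta_{n_k,g_j} \to \alpha_{g_j}$ on $X$ for every fixed $g_j$. For general $g \in G$, one extends $\alpha_g$ either by continuity of the map $g \mapsto \beta_{n_k,g}$ (propagated through the uniform limit from $K$), or by a further diagonal step inside the precompact set $K$; the uniform equicontinuity ensures $\lim_k \beta_{n_k,g}(x)$ exists for every $x \in X$ and every $g$.

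It remains to verify that $\alpha$ is a $G$-action. The identity case is immediate: $\beta_{n,e} = f_n \circ h_n$ is within $1/n$ of $\mathrm{id}_X$ by the approximate-inverse property, so $\alpha_e(x) = x$. For the homomorphism property, compare
$$\beta_{n,gh} \;=\; f_n \circ \alpha_{n,g} \circ \alpha_{n,h} \circ h_n \qquad \text{with} \qquad \beta_{n,g} \circ \beta_{n,h} \;=\; f_n \circ \alpha_{n,g} \circ (h_n \circ f_n) \circ \alpha_{n,h} \circ h_n.$$
Since $h_n \circ f_n$ differs from $\mathrm{id}_{X_n}$ by at most $1/n$ pointwise, the equicontinuity of $\alpha_{n,g}$ propagates this error to $d(\beta_{n,gh}(x),(\beta_{n,g} \circ \beta_{n,h})(x)) \leq \varepsilon + 1/n$ uniformly in $x$. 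Passing to the limit along $(n_k)$, with uniform convergence $\beta_{n_k,g} \to \alpha_g$ handling the composition, yields $\alpha_{gh} = \alpha_g \circ \alpha_h$. Joint continuity of $\alpha$ in $(g,x)$ follows from the equicontinuity carried through to the limit.

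The step I expect to be the main obstacle is precisely the diagonal extraction over $G$: one needs a single subsequence $(n_k)$ that works simultaneously for every $g \in G$. When $G$ is countable (or second countable, so we can pass to a countable dense subset and exploit continuity of the action) the argument is routine; in greater generality one has to work inside the Tychonoff-compact space $K^G$ and pass to a subnet, then upgrade back to a subsequence using separability of $C(X,X)$. The other verifications---that $h_n \circ f_n$ is close to the identity, that equicontinuity survives passage to the limit, and that the action identities hold---are routine bookkeeping once this extraction is in place.
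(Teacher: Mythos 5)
Your proposal is correct and follows essentially the same route as the paper: transfer the actions to $X$ via $\beta_{n,g}=f_n\circ\alpha_{n,g}\circ h_n$, extract a pointwise limit on a countable dense subset of $X$ by a diagonal argument, use the equicontinuity of $\{\alpha_n\}$ together with the $1/n$-isometry bounds to upgrade to uniform convergence and a continuous limit, and verify the action axioms from the approximate-inverse relations $d(x,f_n\circ h_n(x))\le 1/n$ and $d_n(x_n,h_n\circ f_n(x_n))\le 1/n$. One caveat: since $f_n$ and $h_n$ are only $1/n$-isometries and need not be continuous, the maps $\beta_{n,g}$ need not lie in $C(X,X)$, so the literal Arzel\`a--Ascoli invocation should be replaced by your own dense-subset extraction plus the asymptotic equicontinuity estimate (which is exactly what the paper does and already produces a continuous limit); the difficulty you flag about a single subsequence working for all $g\in G$ is likewise present, and in fact glossed over, in the paper's own proof, which fixes $g$ before extracting.
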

\begin{proof}
Let $A=\{a_n\}$ be a countable and dense subset of $X$. Fix $g\in G$. As $X$ is compact there exists a subsequence $\{n_1\}$ such that $\lim_{n_1\to \infty}f_{n_1}\circ \alpha_{n_1,g}\circ h_{n_1}(a_1)$ converges to some point in $X$, denoted by $\alpha_g(a_1)$. Using a standard diagonal argument, we can assume that there exists a subsequence $\{n_k\}$ such that $\alpha_g(a_m)=\lim_{n_k\to \infty}f_{n_k}\circ \alpha_{n_k,g}\circ h_{n_k}(a_m)$, for every $a_m\in A$. 
 Then for every $i,j$, 
 \begin{eqnarray*}
 d(\alpha_g(a_i),\alpha_g(a_j))&=&\lim_{k\to \infty}d(f_{n_k}\circ \alpha_{n_k,g}\circ h_{n_k}(a_i), f_{n_k}\circ \alpha_{n_k,g}\circ h_{n_k}(a_j)).
 \end{eqnarray*}
 Since $f_n$ and $h_n$ are $1/n$-isometries and by equicontinuity of $\{\alpha_n\}$, we see that the map $\alpha_g:A\to X$ is uniformly continuous and $f_{n_k}\circ \alpha_{n_k,g}\circ h_{n_k}\to \alpha_g$ uniformly on $A$ as $k\to\infty$. Therefore, we can extend it to a continuous map $\alpha_g:X\to X$. 
 As $d(x,f_{n}\circ h_n(x))\leq 1/n$ and $d_n(y,f_{n}\circ h_n(y))\leq 1/n$ for every $n\in \N$, $x\in X, y\in X_n$, and $\alpha_n$ is a family of equicontinuous actions, for every $s,t\in G, a\in A$, we have 
 \begin{eqnarray*}
 \alpha_{s}\circ \alpha_t(a)&=&\lim_{k\to\infty}f_{n_k}\circ \alpha_{n_k,s}\circ h_{n_k}\circ f_{n_k}\circ \alpha_{n_k,t}\circ h_{n_k}(a)\\
 &=&\lim_{k\to\infty}f_{n_k}\circ \alpha_{n_k,s}\circ \alpha_{n_k,t}\circ h_{n_k}(a) \\
 &=&\lim_{k\to\infty}f_{n_k}\circ \alpha_{n_k,st}\circ h_{n_k}(a)\\
 &=&\alpha_{st}(a),
 \end{eqnarray*}
and $\alpha_{e_G}(a)=\lim_{k\to \infty}f_{n_k}\circ \alpha_{n_k,e_G}\circ h_{n_k}(a)=a$, where $e_G$ is the identity element of $G$. 
 Therefore $\alpha_{e_G}=Id_X$ and $\alpha_s\circ \alpha_t=\alpha_{st}$ for every $s,t\in G$. For every $g\in G$, the continuity of the map $\alpha_g: X\to X$ is clear. Hence $\alpha$ is an action of $G$ on $X$ we are looking for.
 \end{proof}

\subsection{Equivariant Gromov-Hausdorff distance for actions of $G$ and $H$}\hfill

Let $G$ and $H$ be topological groups. Let $\alpha$ and $\beta$ be actions of $G$ and $H$ on metric spaces $(X,d_X)$ and $(Y,d_Y)$, respectively. For $\varepsilon>0$, an $\varepsilon$-equivariant Gromov-Hausdorff approximation/equivariant Fukaya-Gromov-Hausdorff approximation (abbreviated, respectively, eGHA/eFGHA) $(\rho,f):\alpha \to \beta$ is a couple of maps $f:Y\to X$, $\rho:G\to H$ such that $\rho$ is a homomorphism/$\rho$ is a general map (not necessarily a homomorphism), and $f$ is an $\varepsilon$-GH approximation satisfying
$\sup_{g\in G}d_{sup}(\alpha_g\circ f,f\circ \beta_{\rho(g)})\leq \varepsilon$. 

We define GH-distances of $\alpha$ and $\beta$ by setting
 \begin{align*}
 d_{GH,1}(\alpha,\beta):=\inf\{&\varepsilon>0: \exists\varepsilon-\mbox{eGHA }(\rho,f):\alpha\to \beta \}, \\
d_{GH,2}(\alpha,\beta):=\inf\{&\varepsilon>0: \exists \varepsilon-\mbox{eGHAs }(\rho,f):\alpha\to \beta
\mbox{ and }(\varphi,h):\beta\to \alpha  \},
\end{align*}
if the above infimum exist and $\infty$ otherwise. 
\begin{remark}
\begin{enumerate}
\item The distances $d_{GH,1}$ and $d_{GH,2}$ were introduced by Fukaya for isometric actions \cite{Fu86,Fu88,Fu90,FY} and in his definitions he does not require $\rho:G\to H$ be a homomorphism. To study stability of general continuous actions which may be non-isometric, we adapt definitions of Fukaya to $d_{GH,1}$ and $d_{GH,2}$ by adding the homomorphism property of $\rho$.
\item In the case $G=H$, we see that $d_{GH,2}(\alpha,\beta)\leq d_{GH}(\alpha,\beta)$ where $d_{GH}$ is the GH-distance $d_{GH,G}$ defined in Definition \ref{D-GH distance for one group}.
\end{enumerate}
\end{remark}

\begin{definition}
A continuous action of a topological group $G$ on a metric space $X$ is topologically free if for every $g\in G\setminus \{e_G\}$, the set $\{x\in X:gx\neq x\}$ is dense in $X$.
\end{definition}

For every metric space $(X,d)$, we denote by $\Isom(X)$ the group of all isometry $f:X\to X$, and endow it with the compact-open topology. We recall that a sequence $\{ f_n\}\subset \Isom(X)$ converges to $ f\in \Isom(X)$  in the compact-open topology if and only if for every compact subset $K$ of $X$, $ f_n$ converges uniformly to $ f$  on $K$.
Now we present several basic properties of $d_{GH,1}$ and $d_{GH,2}$. 
\begin{lem}
Let $\alpha,\beta,\gamma$ be actions of topological groups $G,H,K$ on metric spaces $(X,d_X),(Y,d_Y)$ and $(Z,d_Z)$ respectively. The maps $d_{GH,1}$ and $d_{GH,2}$ satisfy
\begin{enumerate}
\item $d_{GH,1}(\alpha,\beta)\geq 0$, $d_{GH,2}(\alpha,\beta)\geq 0$ and $d_{GH,2}(\alpha,\beta)=d_{GH,2}(\beta,\alpha)$;
\item  Assume that $G$ and $H$ are closed in $\Isom(X)$ and $\Isom(Y)$, respectively, and assume further that $\alpha$ and $\beta$ are topologically free isometric actions. Then $d_{GH,2}(\alpha,\beta)=0$ if and only if there exist an isometry $f:Y\to X$ and an isomorphism $\rho:G\to H$ such that $\alpha_g\circ f=f\circ \beta_{\rho(g)}$ for every $g\in G$;
\item Assume that $X,Y$ are compact and $H$ is closed in $\Isom(Y)$. Assume further that $\beta$ is a topologically free isometric action. Then $d_{GH,1}(\alpha,\beta)=0$ if and only if there exist an isometry map $f:Y\to X$ and a homomorphism $\rho:G\to H$ such that $\alpha_g\circ f=f\circ \beta_{\rho(g)}$ for every $g\in G$;
\item  $d_{GH,i}(\alpha,\beta)\leq 2(d_{GH,i}(\alpha,\gamma)+d_{GH,i}(\gamma,\beta)), \mbox{ for i=1,2 }.$
\end{enumerate}
\end{lem}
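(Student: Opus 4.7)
Parts (1) and (4) are essentially bookkeeping. Non-negativity in (1) comes from the convention $\inf\varnothing=+\infty$ together with the defining sets lying in $(0,+\infty)$, and symmetry of $d_{GH,2}$ is immediate because its defining condition treats its two arguments symmetrically. For (4) I would compose near-optimal eGHAs: given $(\rho_1,f_1)\colon \alpha\to\gamma$ at scale $\varepsilon_1$ and $(\rho_2,f_2)\colon \gamma\to\beta$ at scale $\varepsilon_2$, the pair $(\rho_2\circ\rho_1,\,f_1\circ f_2)\colon \alpha\to\beta$ is a $(2\varepsilon_1+\varepsilon_2)$-eGHA. Indeed, $f_1\circ f_2$ is $(\varepsilon_1+\varepsilon_2)$-isometric, its image is $(2\varepsilon_1+\varepsilon_2)$-dense in $X$ (pushing the $\varepsilon_2$-density of $f_2(Y)$ in $Z$ through the $\varepsilon_1$-isometric map $f_1$ costs an extra $\varepsilon_1$ on top of the $\varepsilon_1$-density of $f_1(Z)$ in $X$), and the equivariance estimate chains $\alpha_g\circ f_1\approx_{\varepsilon_1}f_1\circ\gamma_{\rho_1(g)}$ with $\gamma_{\rho_1(g)}\circ f_2\approx_{\varepsilon_2}f_2\circ\beta_{\rho_2\rho_1(g)}$, the second approximation being pushed through the $\varepsilon_1$-isometric $f_1$ at a cost of $\varepsilon_1+\varepsilon_2$. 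Since $2\varepsilon_1+\varepsilon_2\le 2(\varepsilon_1+\varepsilon_2)$, taking $\varepsilon_i$ down to the infima gives the $d_{GH,1}$ case of (4); composing in both directions handles $d_{GH,2}$.

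The backward directions of (2) and (3) are immediate: if $(\rho,f)$ intertwines the actions with $f$ an isometry and $\rho$ a homomorphism (resp.\ isomorphism), then $(\rho,f)$ is a $0$-eGHA, and in case (2) the pair $(\rho^{-1},f^{-1})$, whose equivariance is $\beta_h\circ f^{-1}=f^{-1}\circ\alpha_{\rho^{-1}(h)}$ obtained by conjugating the original intertwining, is a $0$-eGHA in the reverse direction.

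For the forward directions, pick $\varepsilon_n\to 0$ and $\varepsilon_n$-eGHAs $(\rho_n,f_n)\colon \alpha\to\beta$ (and also $(\varphi_n,h_n)\colon \beta\to\alpha$ in case (2)). I would first extract an isometry $f\colon Y\to X$: following the blueprint of Lemma~\ref{L-quasi GH metric on isometry classes of compact metric actions}(6), apply a Cantor diagonal procedure to a countable dense subset $D\subset Y$ (separability of $Y$ comes from compactness in (3), and, in (2), from the mutual GHAs transporting a countable dense set of $X$ into a countable asymptotically dense subset of $Y$) to obtain a subsequence with $f_n(d)\to f(d)$ for every $d\in D$, extend $f$ by continuity to an isometric map, and use the $\varepsilon_n$-net property to force surjectivity. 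Next, for each $g\in G$, the isometries $\beta_{\rho_n(g)}\in\Isom(Y)$ form an equicontinuous family; Arzel\`a--Ascoli in the compact case (3), or a pointwise diagonal combined with equicontinuity in (2), yields a subsequential limit $T_g\in\Isom(Y)$. The closedness of $H$ in $\Isom(Y)$ places $T_g$ in the image of $H$; topological freeness of $\beta$ makes this image injective in $H$, so there is a unique $\rho(g)\in H$ with $\beta_{\rho(g)}=T_g$. Passing to the limit in $\alpha_g\circ f_n=f_n\circ\beta_{\rho_n(g)}+O(\varepsilon_n)$ yields $\alpha_g\circ f=f\circ\beta_{\rho(g)}$, equivalently $\beta_{\rho(g)}=f^{-1}\circ\alpha_g\circ f$. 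The homomorphism property $\rho(g_1g_2)=\rho(g_1)\rho(g_2)$ is then read off this conjugation formula together with injectivity of $\beta$. In (2), the symmetric construction produces $\varphi\colon H\to G$, and combining the two conjugation formulas with faithfulness of both $\alpha$ and $\beta$ gives $\varphi\circ\rho=\mathrm{id}_G$ and $\rho\circ\varphi=\mathrm{id}_H$, so $\rho$ is an isomorphism.

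The main obstacle is the simultaneous consistent extraction of $f$ and of $\rho(g)$ for all $g\in G$. When $G$ is uncountable one should not try to diagonalise over $g$; instead, once $f$ is fixed as a surjective isometry, the conjugation $f^{-1}\alpha_g f$ \emph{defines} the candidate for $\beta_{\rho(g)}$, and one only needs the pointwise limit $\beta_{\rho_n(g)}(y)\to f^{-1}\alpha_g f(y)$ together with closedness of $H$ in $\Isom(Y)$ to conclude that $f^{-1}\alpha_g f$ actually lies in the image of $H$. Topological freeness is precisely the hypothesis that makes $\rho$ well defined, and closedness of $H$ in $\Isom(Y)$ (resp.\ of $G$ in $\Isom(X)$ for (2)) is what keeps the limiting group element inside the acting group rather than just in the ambient isometry group; without these hypotheses the candidate $\rho(g)$ could fail to be unique or fail to exist.
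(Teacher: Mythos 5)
Your proposal is correct on the parts the paper actually proves, and it follows the same route there: part (1) is dismissed as immediate in both, and for part (4) the paper performs exactly your composition $(\varphi\circ\rho,\, f\circ v)$ with the same chaining estimate $2\varepsilon_1+\varepsilon_2\le 2(\varepsilon_1+\varepsilon_2)$ for the equivariance defect (you are in fact slightly more careful than the paper about the density of the image of the composed map, which the paper asserts is $(\varepsilon_1+\varepsilon_2)$-dense when the honest bound is $2\varepsilon_1+\varepsilon_2$; the final constant is unaffected). The genuine difference is in parts (2) and (3): the paper does not prove them at all, but simply cites the proof of Proposition 1.5 of Fukaya's 1986 paper, whereas you reconstruct the argument. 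Your reconstruction is essentially Fukaya's: diagonal extraction of the limiting isometry $f$ from the $\varepsilon_n$-GH approximations, identification of the candidate group element as the conjugate $f^{-1}\circ\alpha_g\circ f$, closedness of $H$ in $\Isom(Y)$ to keep the limit inside the acting group, and topological freeness (hence faithfulness of $\beta$) to make $\rho(g)$ well defined and to read off the homomorphism property from the conjugation formula. Your observation that one should not diagonalise over an uncountable $G$ but instead fix $f$ first and let the conjugation formula define $\rho(g)$ is exactly the right way to organise this, and in case (3) the pointwise convergence $\beta_{\rho_n(g)}\to f^{-1}\alpha_g f$ upgrades to convergence in the compact-open topology because $Y$ is compact and the maps are isometries.

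One caveat: in part (2) the lemma as stated imposes no compactness, properness, or separability on $X$ and $Y$, and your extraction of $f$ (and of the limiting isometries) needs precompactness of the sequences $\{f_n(d)\}_n$ in $X$, which is not available for arbitrary metric spaces; your appeal to ``transporting a countable dense set of $X$'' also presupposes separability of $X$. This is a gap relative to the literal statement, but it is a gap the paper shares, since it resolves (2) only by citation to Fukaya, whose setting carries the compactness-type hypotheses needed to run precisely the argument you describe.
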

\begin{proof}
(1) is clear from the definitions.\\
(2) and (3) are proved in the proof of \cite[Proposition 1.5]{Fu86}.\\
(4) We only need to prove $d_{GH,1}(\alpha,\beta)\leq 2(d_{GH,1}(\alpha,\gamma)+d_{GH,1}(\gamma,\beta))$. The remaining case is similar.  If one of $d_{GH,1}(\alpha,\gamma), d_{GH,1}(\gamma,\beta)$ is $\infty$ then we are done. Now we assume that $d_{GH,1}(\alpha,\gamma)<\infty$ and $d_{GH,1}(\gamma,\beta)<\infty$. Fix $\varepsilon>0$. Then there exist an $\varepsilon_1$-isometry $f:Z\to X$, an $\varepsilon_2$-isometry $v:Y\to Z$, and homomorphisms $\rho:G\to K$, $\varphi:K\to H$ such that $\varepsilon_1<d_{GH,1}(\alpha,\gamma)+\varepsilon$, $\varepsilon_2<d_{GH,1}(\gamma,\beta)+\varepsilon$ and
  \begin{align*}
&\sup_{g\in G}\{d_{sup}(f\circ \gamma_{\rho(g)},\alpha_g\circ f)\}\leq\varepsilon_1, \\
&\sup_{k\in K}\{d_{sup}(v\circ \beta_{\varphi(k)},\gamma_k\circ v)\}\leq\varepsilon_2.
\end{align*}
Hence, for every $y\in Y$ and $g\in G$, one has
\begin{eqnarray*}
d_X(f\circ v\circ \beta_{\varphi\circ\rho(g)}(y),\alpha_g\circ f\circ v(y))&\leq & d_X(f\circ v\circ \beta_{\varphi\circ\rho(g)}(y),f\circ \gamma_{\rho(g)}\circ v(y))+\\
&&+d_X(f\circ \gamma_{\rho(g)}\circ v(y),\alpha_g\circ f\circ v(y))\\
&\leq& \varepsilon_1+d_Z(v\circ \beta_{\varphi\circ\rho(g)}(y),\gamma_{\rho(g)}\circ v(y))+\varepsilon_1\\
&\leq& 2\varepsilon_1+\varepsilon_2\\
&<& 2(\varepsilon_1+\varepsilon_2).  
\end{eqnarray*}
Thus, $\sup_{g\in G}d_{sup}(f\circ v\circ \beta_{\varphi\circ\rho(g)},\alpha_g\circ f\circ v)\leq 2(\varepsilon_1+\varepsilon_2).$ 
On the other hand, it is clear that the map $f\circ v: Y\to X$ is an $(\varepsilon_1+\varepsilon_2)$-isometry and therefore it is also a $2(\varepsilon_1+\varepsilon_2)$-isometry. Hence 
$$d_{GH,1}(\alpha,\beta)\leq 2(\varepsilon_1+\varepsilon_2)<2(d_{GH,1}(\alpha,\gamma)+d_{GH,1}(\gamma,\beta)+2\varepsilon), \mbox { for every }\varepsilon>0.$$
Therefore, $d_{GH,1}(\alpha,\beta)\leq 2(d_{GH,1}(\alpha,\gamma)+d_{GH,1}(\gamma,\beta)).$
\end{proof}
\begin{definition}
An action $\alpha$ of a topological group $G$ on a metric space $X$ is \textit{strongly GH-stable} if for every $\varepsilon>0$, there is $\delta>0$ such that for every continuous action $\beta$ of a topological group $H$ on a metric space $Y$ with $d_{GH,1}(\alpha,\beta)<\delta$ there are a continuous $\varepsilon$-isometry $h:Y\to X$ and a homomorphism $\rho:G\to H$ such that $\alpha_g\circ h=h\circ \beta_{\rho(g)}$ for every $g\in G$. 
\end{definition}
Before proving Theorem \ref{T-expansive and POTP implies GH-stability for two groups}, let us present a version of \cite[Lemma 2.10]{ChungLee} and \cite[Lemma 4.5]{KDD} for an expansive action of a general countable group on a proper metric space. We recall that a metric space $X$ is \textit{proper} if every closed ball is compact.
\begin{lem}
\label{L-expansiveness determines the topology}
Let $\alpha$ be an expansive action of a countable group $G$ on a proper metric space $(X,d)$ and let $c$ be an expansive constant of $\alpha$. Then, for every $x\in X$ and every $\varepsilon>0$, there exists a non-empty finite subset $F$ of $G$ such that whenever $\sup_{g\in F}d(\alpha_gx,\alpha_gy)\leq c$, one has $d(x,y)<\varepsilon$.
\end{lem}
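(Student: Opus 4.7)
The plan is to argue by contradiction using the countability of $G$ together with the properness of $X$, in a standard diagonal/compactness fashion.

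First, I would fix $x \in X$ and $\varepsilon > 0$ and suppose that the conclusion fails: for every finite subset $F \subset G$ there exists a point $y_F \in X$ with $\sup_{g \in F} d(\alpha_g x, \alpha_g y_F) \leq c$ but $d(x, y_F) \geq \varepsilon$. Since $G$ is countable I would enumerate $G = \{g_n\}_{n \in \N}$ with $g_1$ equal to the identity element, and set $F_n = \{g_1, \ldots, g_n\}$. Applying the hypothesis to each $F_n$ produces a sequence $(y_n) \subset X$ satisfying $\sup_{g \in F_n} d(\alpha_g x, \alpha_g y_n) \leq c$ and $d(x, y_n) \geq \varepsilon$ for every $n$.

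Next I would exploit properness. Because $g_1$ is the identity, the inequality for $g = g_1$ gives $d(x, y_n) \leq c$, so all the $y_n$ lie in the closed ball $\bar{B}(x, c)$, which is compact since $(X,d)$ is proper. Passing to a subsequence, I may assume $y_n \to y_\infty$ in $X$. The lower bound $d(x, y_n) \geq \varepsilon$ passes to the limit, so $y_\infty \neq x$.

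Finally, I would upgrade the pointwise control to a uniform contradiction with expansiveness. For any fixed $g \in G$, the element $g$ lies in $F_n$ for all sufficiently large $n$, hence $d(\alpha_g x, \alpha_g y_n) \leq c$ eventually. Since the action is continuous, $\alpha_g$ is continuous, so letting $n \to \infty$ yields $d(\alpha_g x, \alpha_g y_\infty) \leq c$. As this holds for every $g \in G$, we obtain $\sup_{g \in G} d(\alpha_g x, \alpha_g y_\infty) \leq c$ with $x \neq y_\infty$, contradicting expansiveness with constant $c$. The main conceptual point, and the reason properness is used, is the extraction step: without it one could not convert the family $(y_n)$ into an actual non-trivial point witnessing the failure of expansiveness. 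Countability of $G$ is what allows the single sequence $(y_n)$ to simultaneously handle all $g \in G$ in the limit.
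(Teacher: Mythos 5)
Your proposal is correct and follows essentially the same argument as the paper: negate the conclusion, take an exhausting sequence of finite sets $F_n$ containing the identity, extract a convergent subsequence of the witnesses $y_n$ using properness, and pass to the limit to contradict expansiveness. Your write-up is in fact slightly more explicit than the paper's, since you spell out why the $y_n$ stay in a compact ball (the identity lies in every $F_n$) and why the bound $d(\alpha_g x,\alpha_g y_n)\leq c$ survives the limit (continuity of $\alpha_g$), both of which the paper leaves implicit.
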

\begin{proof}
We will prove by contradiction. Fix $x\in X$. Assume that there exists an $\varepsilon>0$ such that for every non-empty finite subset $F$ of $G$, there exists $y_F\in X$ such that $\sup_{g\in F} d(\alpha_gx,\alpha_gy_F)\leq c$ and $d(x,y_F)\geq\varepsilon$. Choose a sequence of finite subsets $F_n$ of $G$ such that $ \{e_G\}\subset F_1\subset F_2\subset \cdots$ and $G=\bigcup_{n\in \N}F_n$. Then for every $n\in \N$, there exists $y_n\in X$ such that $\sup_{g\in F_n} d(\alpha_gx,\alpha_gy_n)\leq c$ and $d(x,y_n)\geq\varepsilon$. As $X$ is proper, after taking a subsequence, we can assume that $y_n\to y$. Then we have $d(\alpha_gx,\alpha_gy)\leq c$ for all $g\in G$ and $d(x,y)\geq\varepsilon$ which contradicts expansiveness of $\alpha$.
\end{proof}
Now we are ready to prove Theorem \ref{T-expansive and POTP implies GH-stability for two groups}.
\begin{proof}[Proof of Theorem \ref{T-expansive and POTP implies GH-stability for two groups}]
(1) Let $\varepsilon>0$ with $\varepsilon<c$ and take $0<\varepsilon_1<\varepsilon/4$. As $\alpha$ has POTP there exist $\delta>0$ and a finite subset $S$ of $G$ such that every $(\delta,S)$-pseudo-orbit is $\varepsilon$-traced by some point $x\in X$. We can choose $\delta<\varepsilon_1$. Let $\beta$ be a continuous action of a topological group $H$ on a metric space $Y$ such that $d_{GH,1}(\alpha,\beta)<\delta$. Then there are a $\delta$-isometry $u:Y\to X$ and a homomorphism $\rho:G\to H$ such that $d_{sup}(u\circ \beta_{\rho(t)},\alpha_t\circ u)<\delta$ for every $t\in G$. Hence, for every $y\in Y$ and every $s\in S,t\in G$, one has 
$$d_X(\alpha_s(u(\beta_{\rho(t)}y)),u(\beta_{\rho(st)}y))= d_X(\alpha_s\circ u(\beta_{\rho(t)}y),u\circ\beta_{\rho(s)}(\beta_{\rho(t)}y))<\delta. $$  

Thus for every $y\in Y$, $\{u(\beta_{\rho(t)}y)\}_{t\in G}$ is a $(\delta,S)$-pseudo-orbit for $\alpha$. Hence, there exists $x\in X$ such that $d_X(\alpha_tx,u(\beta_{\rho(t)}y))<\varepsilon_1$ for every $t\in G$. By expansiveness of $\alpha$ and the choice of $\varepsilon_1$, it follows from Remark \ref{R-unique traced of pseudo-orbit} that such an $x$ is unique denoted by $h(y)$. Then we get the map $h:Y\to X$ satisfying 
$$d_X(\alpha_t h(y),u(\beta_{\rho(t)}y))< \varepsilon_1, \mbox{ for all } y\in Y, t\in G \mbox{  }(*).$$

In particular, we have $\sup_{y\in Y}d_X(h(y),u(y))\leq \varepsilon_1$. Therefore, for every $y_1,y_2\in Y$, we have 
\begin{eqnarray*}
|d_X(h(y_1),h(y_2))-d_Y(y_1,y_2)|&\leq &|d_X(h(y_1),h(y_2))-d_X(u(y_1),u(y_2))|+\\
&&|d_X(u(y_1),u(y_2))-d_Y(y_1,y_2)|\\
&\leq& |d_X(h(y_1),h(y_2))-d_X(h(y_1),u(y_2))|+ \\
&&|d_X(h(y_1),u(y_2))-d_X(u(y_1),u(y_2))|+\delta\\
&\leq & d_X(h(y_2),u(y_2))+d_X(h(y_1),u(y_1))+\delta\\
&\leq & 2\varepsilon_1+\delta<\varepsilon.
\end{eqnarray*}

On the other hand, one has 
$$d_H(X,h(Y))\leq d_H(X,u(Y))+d_H(u(Y),h(Y))\leq \delta+\varepsilon_1<\varepsilon.$$

Therefore the map $h:Y\to X$ is an $\varepsilon$-isometry. 

Now we will prove that $\alpha_t\circ h=h\circ \beta_{\rho(t)}$ for every $t\in G$. 
By (*), for every $y\in Y$ and $t_1,t_2\in G$,  one has 
\begin{align*}
&d_X(\alpha_{t_1}h(\beta_{\rho(t_2)}y),u\beta_{\rho(t_1t_2)}y))=d_X(\alpha_{t_1}h(\beta_{\rho(t_2)}y),u\beta_{\rho(t_1)}(\beta_{\rho(t_2)}y))<\varepsilon_1 \mbox{ and}\\
&d_X(\alpha_{t_1}\alpha_{t_2}h(y),u\beta_{\rho(t_1t_2)}y)=d_X(\alpha_{t_1t_2}h(y),u\beta_{\rho(t_1t_2)}y)<\varepsilon_1.
\end{align*}

Hence applying Remark \ref{R-unique traced of pseudo-orbit}, we get $\alpha_{t_2}\circ h=h\circ \beta_{\rho(t_2)}$ for every $t_2\in G$.

(2) Let $\varepsilon_2>0$. As $c$ is an expansive constant of $\alpha$, applying Lemma \ref{L-expansiveness determines the topology}, there exists a non-empty finite subset $A$ of $G$ such that whenever $\sup_{t\in A}d_X(\alpha_tx,\alpha_ty)\leq c$ one has $d_X(x,y)<\varepsilon_2$. As $Y$ is compact and $\beta$ is a continuous action, we can choose $\delta_1>0$ such that for every $y_1,y_2\in Y$ with $d_Y(y_1,y_2)<\delta_1$, one has $d_Y(\beta_{\rho(t)}y_1,\beta_{\rho(t)}y_2)<c/4$ for every $t\in A$. Then for every $y_1,y_2\in Y$ with $d_Y(y_1,y_2)<\delta_1$ and $t\in A$, applying (*) we get 
\begin{eqnarray*}
d_X(\alpha_th(y_1),\alpha_th(y_2))&\leq& d_X(\alpha_th(y_1),u\beta_{\rho(t)}(y_1))+d_X(u\beta_{\rho(t)}(y_1),u\beta_{\rho(t)}(y_2))+\\
&&+d_X(u\beta_{\rho(t)}(y_2),\alpha_th(y_2))\\
&< &2\varepsilon_1+d_Y(\beta_{\rho(t)}y_1,\beta_{\rho(t)}y_2)+\delta\\
&<&c/2+c/4+c/4=c.
\end{eqnarray*}

Hence $d_X(h(y_1),h(y_2))<\varepsilon_2$ for $y_1,y_2\in Y$ with $d_Y(y_1,y_2)<\delta_1$, showing that $h$ is continuous.
\end{proof}

\begin{remark}
It would be interesting to inquire whether or not we have a version of Theorem \ref{T-expansive and POTP implies GH-stability for two groups} for actions of a general topological group $G$. If the group $G$ is not countable, the Definition \ref{D-expansivity} is not good to define expansiveness for an action of $G$. For example, in the case $G=\R$, if we define expansiveness for a continuous action $\alpha$ of $G$ on a compact metric space $X$ as that there exists a constant $c>0$ such that $\sup_{t\in \R}d(\alpha_tx,\alpha_t y)<c$ implies $x=y$, then no such actions exist \cite[page 181]{BW}.  
\end{remark}
Now we illustrate some examples of expansive actions having POTP.
\begin{example}
Let $A\in GL_n(\R)$ and define the map $T:\T^n\to \T^n, x\mapsto Ax \mbox{ (mod } \Z^n)$. If $A$ does not have eigenvalues of modulus 1 then $T$ is expansive and has POTP. 
\end{example}
\begin{example}
We endow $\R^n$ with the usual Euclidean metric. Let $A\in GL_n(\R)$ and define the map $T:\R^n\to \R^n, x\mapsto Ax $. If $A$ does not have eigenvalues of modulus 1 then $T$ is expansive and has POTP \cite[Lemma 2.2.33,Theorem 2.3.15]{AH}. 
\end{example}
\begin{example}
Let $G$ be a finitely generated virtually nilpotent group, i.e. $G$ has a nilpotent subgroup with finite index. Let $\alpha$ be an action of $G$ on a compact metric space $X$. If there is some $g\in G$ such that $\alpha_g$ is expansive and has POTP then the action $\alpha$ is expansive and has POTP \cite[Lemma 2.13]{ChungLee}.

Combining with the previous example, we see that if we choose a finitely generated nilpotent subgroup $G$ of $GL_n(\R)$ such that there exists $g\in G$ which does not have eigenvalues of modulus 1, then the natural action of $G$ on $\T^n$ is expansive and has POTP.
\end{example}
\begin{example}
Let $G$ be a countable group. Then every subshift $G\curvearrowright X$, where $X\subset A^G$ for some finite set $A$, is expansive. Furthermore, the subshift $G\curvearrowright X$ has POTP if and only if it is of finite type \cite[Theorem 3.2]{ChungLee}.
\end{example}
\begin{example}
Let $G$ be a countable group. We denote by $\Z G$ the group ring of $G$ with coefficients in $\Z$. It consists of finitely supported $\Z$-valued functions
$f$
on $G$, which we shall write as $\sum_{s\in G}f_ss$. The algebraic structure of $\Z G$ is defined by $(\sum_{s\in G} f_s s)+(\sum_{s\in G} g_s s)=\sum_{s\in G}(f_s+g_s)s$ and $(\sum_{s\in G}f_ss)(\sum_{s\in G}g_ss)=\sum_{s\in G}(\sum_{t\in G}f_tg_{t^{-1}s})s$.

We denote by $\ell^1(G)$ the Banach algebra of all absolutely summable $\R$-valued functions on $G$, equipped with
the $\ell^1$-norm $\|\cdot \|_1$. We shall write $f\in \ell^1(G)$ as $\sum_{s\in G}f_ss$. Note that $\ell^1(G)$ has an involution $f\mapsto f^*$ defined by
$(\sum_{s\in G}f_ss)^*=\sum_{s\in G}f_ss^{-1}$.

For every $k\in \N$, we denote by $M_{k}(\ell^1(G))$ the space of all $k\times k$ matrices with entries in $\ell^1(G)$. 
The involution of $\ell^1(G)$ also extends naturally to an isometric linear map on $M_{k}(\ell^1(G))$ given
by
$$ (f_{i, j})_{1\leq i,j\le k}^*:=(f_{j, i}^*)_{1\le i,j\le k}.$$

For a locally compact abelian group $X$, we denote by $\widehat{X}$ its Pontryagin dual.

Note that for each $k\in \N$, we may identify the Pontryagin dual $\widehat{(\Z G)^k}$  of $(\Z G)^k$ with $((\R/\Z)^k)^G=((\R/\Z)^G)^k$ naturally. Under this identification,
the canonical action of $G$ on $\widehat{(\Z G)^k}$ is just the left shift action on $((\R/\Z)^k)^G$. 
For $A\in M_k(\Z G)$, we denote $X_A:=\widehat{(\Z G)^k/(\Z G)^kA}$ then $$X_A=\{x\in ((\R/\Z)^k)^G:(xA^*)_g=0_{\R^k/\Z^k}, \mbox{ for every }g\in G\}.$$

Let $\alpha_A$ be the natural left action of $G$ on $X_A$, i.e. $\alpha_{A,g}((x)_{h\in G})=(x_{g^{-1}h})_{h\in G}$, for every $g\in G$. If $A$ is invertible in $M_k(\ell^1(G))$ then $\alpha_A$ is expansive \cite[Lemma 3.7]{ChungLi} and has POTP \cite[Theorem 3.4]{Mey}.
\end{example}
\begin{example}
Let $G$ be the Baumslag-Solitar group $\langle a,b|ba=a^nb \rangle$, where $n\geq 2$. Let $\lambda>n$ and consider the action $\alpha$ of $G$ on $\R^2$ generated by two maps $\alpha_a(x):=Ax$, and $\alpha_b(x):=Bx$, where \[A=
\begin{bmatrix}
    1  &  0      \\
    1  &  1      
\end{bmatrix}, 
B= 
\begin{bmatrix}
    \lambda  &  0      \\
    0  &  n\lambda      
\end{bmatrix} 
\] 
As $BA=A^nB$, the action $\alpha$ is well defined. Since the map $\alpha_b$ is expansive the action $\alpha$ is also. And the POTP of $\alpha$ is from \cite[Theorem 2]{OT}.
\end{example}
The next two examples illustrate Theorem \ref{T-expansive and POTP implies GH-stability for two groups}.
\begin{example}
\label{E-GH distance 1}
Let $(X,d_X)$ and $(Y,d_Y)$ be metric spaces and the maps $h:Y\to X$, $f:X\to Y$ as in Example \ref{E-GH distance}. Let $A=\left(\begin{array}{cc} 1 & 2\\ 3 & 4 \end{array}\right)\in GL_2(\R)$ and $G$ be the group generated by $A$. Then $G$ is isomorphic to $\Z$. Let $\rho:G\to \Z^2$ be the homomorphism defined by $\rho(A^n)=(n,0)$ for every $n\in \Z$. Let $\alpha$ be the natural action of $G$ on $X=\T^2$. Let $\gamma$ be an arbitrary action of $\Z^2$ on $S^1_{1/n}\times S^1_{1/n}$ and let $\bar{\alpha}$ be an action of $\Z^2$ on $\T^2$ such that $\bar{\alpha}_{(1,0)}=\alpha_A$. Let $\bar{\beta}$ be the product action of $\Z^2$ on $(Y,d_Y)$ induced from the actions $\gamma$ and $\bar{\alpha}$. Then for every $n\in \Z, s\in S^1_{1/n}\times S^1_{1/n},x\in \T^2$, we have $\alpha_{A^n}(h(s,x))=\alpha_{A^n}(x)=\bar{\alpha}_{(n,0)}(x)=h(\bar{\beta}_{\rho(A^n)}(s,x)).$ Because $h:Y\to X$ is a $\frac{\sqrt{2}\pi}{n}$-isometry, we get that $d_{GH,1}(\alpha,\bar{\beta})\leq \frac{\sqrt{2}\pi}{n}$.

Now let $\rho_1:\Z^2\to G$ be a homomorphism such that $\rho_1((1,0))=A$. Let $\gamma$ be an arbitrary action of $\Z^2$ on $S^1_{1/n}\times S^1_{1/n}$ and let $\tilde{\alpha}$ be an action of $\Z^2$ on $\T^2$ defined by $\tilde{\alpha}_{g_1}:=\alpha_{\rho_1(g_1)}$ for every $g_1\in \Z^2$. Let $\tilde{\beta}$ be the product action of $\Z^2$ on $(Y,d_Y)$ induced from the actions $\gamma$ and $\tilde{\alpha}$.
Then for every $n\in \Z, s\in S^1_{1/n}\times S^1_{1/n},x\in X$, we have 
$$h(\tilde{\beta}_{\rho(A^n)}(s,x))=\tilde{\alpha}_{\rho(A^n)}x=\tilde{\alpha}_{(n,0)}x=\alpha_{\rho_1((n,0))}x=\alpha_{A^n}x=\alpha_{A^n}(h(s,x)).$$
On the other hand, for every $g_1\in \Z^2,x\in X$, we have 
\begin{eqnarray*}
d_Y(\tilde{\beta}_{g_1}f(x),f(\alpha_{\rho_1(g_1)}(x))&=&d_Y(\tilde{\beta}_{g_1}((\bar{s},\bar{s},x)),(\bar{s},\bar{s},\alpha_{\rho_1(g_1)}(x))\\
&=&d_Y(\gamma_{g_1}(\bar{s},\bar{s}),\tilde{\alpha}_{g_1}(x)), (\bar{s},\bar{s},\alpha_{\rho_1(g_1)}(x))\\
&=&d_Y(\gamma_{g_1}(\bar{s},\bar{s}),\alpha_{\rho_1(g_1)}(x)), (\bar{s},\bar{s},\alpha_{\rho_1(g_1)}(x))\\
&\leq & \sqrt{2\diam^2 S^1_{1/n}} = \frac{\sqrt{2}\pi}{n}.
\end{eqnarray*}
As $f:Y\to X$ is a $\frac{\sqrt{2}\pi}{n}$-isometry, we get that $d_{GH,2}(\alpha,\tilde{\beta})\leq \frac{\sqrt{2}\pi}{n}$. Because $A$ does not have eigenvalues of modulus 1 we know that $\alpha$ is expansive and has POTP.
\end{example}
\begin{example}
If we replace $(X,d_X)$  by $\R^2$ with the usual Euclidean metric and use the same arguments as in Example \ref{E-GH distance 1} then we can construct an action $\alpha$ of $\Z$ on $\R^2$, which is expansive and has POTP, and two actions $\bar{\beta},\tilde{\beta}$ of $\Z^2$ on $Y=S^1_{1/n}\times S^1_{1/n}\times X$
such that $$\max\{d_{GH,1}(\alpha,\bar{\beta}),d_{GH,2}(\alpha,\tilde{\beta})\} \leq \frac{\sqrt{2}\pi}{n}.$$ 
\end{example}

\section{Actions on Wasserstein spaces}
As in this section we deal with induced actions on Wasserstein spaces, we need GH-approximations be measurable. We first prove that for certain actions we can replace eGHAs by measurable ones. 

The following lemma should be well known, however we have not found it in the literature. Therefore, for completeness we give a simple proof here which follows from the idea of the proof of \cite[Lemma 3.5]{Shioya}.
\begin{lem}
\label{L-measurable GHA}
Let $(X,d_X)$ and $(Y,d_Y)$ be Polish metric spaces. Let $\varepsilon>0$ and $f:X\to Y$ be an $\varepsilon$-GH approximation.  Let $f':Y\to X$ be the inverse $3\varepsilon$-GH approximation of $f$ as in Definition \ref{D-inverse GHA}. Then 
\begin{enumerate}
\item
There exists a $5\varepsilon$-GH approximation $f_1:X\to Y$ such that $f_1$ is measurable and $d_Y(f(x),f_1(x))\leq 2\varepsilon$ for every $x\in X$.
\item There exists a $9\varepsilon$-GH approximation $f'_1:Y\to X$ such that $f'_1$ is measurable, $\sup_{y\in Y}d_X(f'(y),f'_1(y))\leq 2\varepsilon$, $\sup_{x\in X}d_X(x,(f'_1\circ f)(x))\leq 4\varepsilon$ and $\sup_{y\in Y}d_Y(y,(f\circ f'_1)(y))\leq 4\varepsilon$.
\end{enumerate}
\end{lem}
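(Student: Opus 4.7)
The plan is to construct $f_1$ and $f'_1$ as Borel simple functions using countable dense subsets of $X$ and $Y$ (which exist since $X$ and $Y$ are Polish). This makes measurability automatic, and the distance bounds follow from the $\varepsilon$-isometric property of $f$ together with the defining inequality for $f'$ in Definition \ref{D-inverse GHA}.

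For part (1), I would fix a countable dense subset $\{x_n\}_{n\in\N}$ of $X$ and form the Borel partition $C_n := B(x_n, \varepsilon) \setminus \bigcup_{k<n} B(x_k, \varepsilon)$ of $X$. Then define $f_1 \colon X \to Y$ by $f_1(x) := f(x_n)$ for $x \in C_n$; this is a Borel simple function. Since $d_X(x, x_n) < \varepsilon$ on $C_n$, the $\varepsilon$-isometric property of $f$ immediately yields $d_Y(f(x), f_1(x)) \leq 2\varepsilon$. For the isometric estimate of $f_1$, splitting
\[
|d_Y(f_1(x), f_1(x')) - d_X(x, x')| \leq |d_Y(f(x_n), f(x_m)) - d_X(x_n, x_m)| + d_X(x, x_n) + d_X(x', x_m)
\]
with $x \in C_n$, $x' \in C_m$ gives $3\varepsilon$-isometry, hence a fortiori $5\varepsilon$-isometry. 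Combining $Y = B'_\varepsilon(f(X))$ with the pointwise bound $d_Y(f, f_1) \leq 2\varepsilon$ shows $f_1(X)$ is a $3\varepsilon$-net in $Y$, so $f_1$ is a $5\varepsilon$-GH approximation.

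For part (2), the first observation is that $\{f(x_n)\}$ is a $2\varepsilon$-net in $Y$: any $y$ lies within $\varepsilon$ of some $f(x)$, and for every $\delta>0$ one can find $x_n$ with $d_X(x, x_n) < \delta$, so by $\varepsilon$-isometry $d_Y(f(x_n), y) < 2\varepsilon + \delta$, and letting $\delta \to 0$ gives the net property. I would then form the Borel partition $E_n := \{y \in Y : d_Y(y, f(x_n)) \leq 2\varepsilon\} \setminus \bigcup_{k<n}\{y : d_Y(y, f(x_k)) \leq 2\varepsilon\}$ and set $f'_1(y) := x_n$ for $y \in E_n$. The bound $d_Y(y, f(f'_1(y))) \leq 2\varepsilon \leq 4\varepsilon$ is immediate from $y \in E_n$, while $d_X(x, f'_1(f(x))) \leq 3\varepsilon \leq 4\varepsilon$ follows by applying the $\varepsilon$-isometric property of $f$ to the definition of the cell containing $f(x)$. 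The $9\varepsilon$-GH approximation property follows from a $5\varepsilon$-isometric estimate parallel to part (1) together with $f'_1(Y)$ being a $3\varepsilon$-net in $X$ (again from the analogue of $d_X(x, f'_1(f(x))) \leq 3\varepsilon$).

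The most delicate point, and the main obstacle to a completely routine proof, is the bound $\sup_{y\in Y} d_X(f'(y), f'_1(y)) \leq 2\varepsilon$. The direct estimate
\[
d_X(f'(y), x_n) \leq d_Y(f(f'(y)), f(x_n)) + \varepsilon \leq d_Y(f(f'(y)), y) + d_Y(y, f(x_n)) + \varepsilon \leq \varepsilon + 2\varepsilon + \varepsilon = 4\varepsilon
\]
on $E_n$ only gives $4\varepsilon$. To sharpen the constant I would exercise the choice freedom in Definition \ref{D-inverse GHA}: construct $f'_1$ first on the partition $\{E_n\}$ as above, and then declare $f'$ itself to be chosen cell by cell so that $f'(y)$ lies as close as possible to $f'_1(y) = x_n$ among points $x$ satisfying the required $d_Y(f(x), y) \leq \varepsilon$, exploiting the local abundance of such $x$ near $x_n$ coming from the $\varepsilon$-isometry of $f$ and $d_Y(y, f(x_n)) \leq 2\varepsilon$. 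The remaining verifications are then routine triangle-inequality bookkeeping.
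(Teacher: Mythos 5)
Your part (1) is correct and is essentially the paper's own argument: partition $X$ into Borel cells $B'_\varepsilon(x_{n+1})\setminus\bigcup_{j\leq n}B'_\varepsilon(x_j)$ built from a countable dense subset and set $f_1:=f(x_n)$ on the $n$-th cell; your estimates are in fact slightly sharper than the paper's. For part (2) the paper simply reruns the part-(1) construction on $f'$ itself (cells around a countable dense subset of $Y$, with $f'_1(y):=f'(y_n)$) and then deduces the two composition bounds from $\sup_y d_X(f'(y),f'_1(y))\leq 2\varepsilon$ by the triangle inequality. You instead build $f'_1$ from the dense subset of $X$ pulled back through $f$; this is a genuinely different construction, and its virtue is that it produces measurability, the $9\varepsilon$-GHA property, and the bounds $d_X(x,(f'_1\circ f)(x))\leq 4\varepsilon$ and $d_Y(y,(f\circ f'_1)(y))\leq 4\varepsilon$ directly, without ever comparing with $f'$ --- and those are the only conclusions of (2) that the paper uses afterwards. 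One small repair is needed: the sets $\{y:d_Y(y,f(x_n))\leq 2\varepsilon\}$ need not cover $Y$, since density of $\{x_n\}$ only yields $\inf_n d_Y(y,f(x_n))\leq 2\varepsilon$ and the infimum need not be attained; replacing the condition by the open one $d_Y(y,f(x_n))<3\varepsilon$ (which is genuinely achieved for every $y$ because $Y=B'_\varepsilon(f(X))$) fixes the covering and still gives all the stated constants except the one discussed next.

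The genuine gap is the bound $\sup_{y\in Y}d_X(f'(y),f'_1(y))\leq 2\varepsilon$. Your direct estimate honestly gives $4\varepsilon$, and the proposed repair does not work as described. First, $f'$ is fixed in the hypothesis of the lemma before $f'_1$ is constructed, so you are not free to re-choose it cell by cell. Second, even granting that freedom, the claimed ``local abundance'' of admissible points near $x_n$ is precisely what needs proof and is not available: for $y\in E_n$ you only know $d_Y(y,f(x_n))\leq 2\varepsilon$, so every admissible value $x$ for $f'(y)$ (every $x$ with $d_Y(f(x),y)\leq\varepsilon$) satisfies merely $d_Y(f(x),f(x_n))\leq 3\varepsilon$ and hence $d_X(x,x_n)\leq 4\varepsilon$; nothing forces the nonempty admissible set $f^{-1}(B'_\varepsilon(y))$ to meet $B'_{2\varepsilon}(x_n)$. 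For what it is worth, the paper's own ``similarly to (1)'' derivation of this bound also only yields $d_X(f'(y),f'(y_n))\leq d_Y(f\circ f'(y),f\circ f'(y_n))+\varepsilon\leq 3\varepsilon+d_Y(y,y_n)$, i.e.\ a constant no better than $3\varepsilon$, so the $2\varepsilon$ looks like an overstatement there too; in your write-up you should either record the bound you can actually prove ($4\varepsilon$ with your cells, or $5\varepsilon$ after the covering fix) and check it suffices downstream, or drop the comparison with $f'$ altogether and prove the composition bounds directly, which your construction already does.
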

\begin{proof}
(1) As $X$ is separable, there exists a countable dense subset $\{x_n\}_{n\in \N}$ of $X$. We put $B_1:=B'_{\varepsilon}(x_1) \mbox{ and } B_{n+1}:=B'_{\varepsilon}(x_{n+1})\setminus\bigcup_{j=1}^nB'_{\varepsilon}(x_j), \mbox{ for } n\geq 1.$
Then $\{B_n\}_{n\in\N}$ is a disjoint covering of $X$ and $B_n$ is measurable for every $n\in \N$. For every $x\in X$, there exists a unique $n$ such that $x\in B_n$. We define the map $f_1:X\to Y$ by $f_1(x):=f(x_n)$ where $x\in B_n$. Then $f_1$ is measurable. For every $x\in B_n$, 
we have $d_Y(f(x),f_1(x))=d_Y(f(x),f(x_n))\leq d_X(x,x_n)+\varepsilon\leq 2\varepsilon$. Therefore, for every $x,x'\in X$, we obtain 
\begin{eqnarray*}
|d_Y(f_1(x),f_1(x'))-d_X(x,x')|&\leq & |d_Y(f_1(x),f_1(x'))-d_Y(f_1(x),f(x'))|+\\
&+&|d_Y(f_1(x),f(x'))-d_Y(f(x),f(x')|+|d_Y(f(x),f(x'))-d_X(x,x')|\\
&\leq &d_Y(f_1(x'),f(x'))+d_Y(f_1(x),f(x))+\varepsilon\\
&\leq & 5\varepsilon.
\end{eqnarray*}
On the other hand, for every $y\in Y$, there exists $x\in X$ such that $d_Y(y,f(x))\leq \varepsilon$ and hence $d_Y(y,f_1(x))\leq d_Y(y,f(x))+d_Y(f(x),f_1(x))\leq 3\varepsilon$. Therefore, $f_1$ is a measurable $5\varepsilon$-GH approximation from $X$ to $Y$.

(2) Similarly to (1), we can find a $9\varepsilon$-GH approximation $f'_1:Y\to X$ such that $f'_1$ is measurable, $\sup_{y\in Y}d_X(f'(y),f'_1(y))\leq 2\varepsilon$. For every $x\in X$, $$d_X(x,f_1'\circ f(x))\leq d_X(x,f'\circ f(x))+d_X(f'\circ f(x),f'_1\circ f(x))\leq 2\varepsilon+2\varepsilon=4\varepsilon.$$
And for every $y\in Y$, $$d_Y(y,f\circ f'_1(y))\leq d_Y(y,f\circ f'(y))+d_Y(f\circ f'(y),f\circ f'_1(y))\leq \varepsilon+d_Y(f'(y),f'_1(y))+\varepsilon\leq 4\varepsilon.$$
\end{proof}
\begin{lem}
\label{L-measurable eGHA}
\begin{enumerate}
\item Let $\alpha$ and $\beta$ be actions of topological groups $G$ and $H$ on Polish metric spaces $(X,d_X)$ and $(Y,d_Y)$, respectively. Let $\varepsilon>0$ and $(\rho,f):\alpha \to \beta$ be an $\varepsilon$-eFGHA. Assume that $\beta$ is an isometric action. Then there exists a $5\varepsilon$-eFGHA $(\rho,f_1):\alpha\to \beta$ such that $f_1$ is measurable.
\item Let $\alpha$ and $\beta$ be actions of a finitely generated group $G$ on Polish metric spaces $(X,d_X)$ and $(Y,d_Y)$ respectively, and let $S$ be a finitely generating subset of $G$. Let $\varepsilon>0$ and choose $\varepsilon'>0$ such that for every $y,y'\in Y$ with $d_Y(y,y')<3\varepsilon'$ one has $d_Y(\beta_s(y),\beta_s(y'))<\varepsilon$ for every $s\in S$. Let $f:G\curvearrowright X\to G\curvearrowright Y$ be an $(\varepsilon',S)$-GH approximation. Then there exists a $(5\varepsilon,S)$-GH approximation $f_1:G\curvearrowright X\to G\curvearrowright Y$ such that $f_1$ is measurable.
\end{enumerate}
\end{lem}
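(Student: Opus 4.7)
My plan is to invoke Lemma \ref{L-measurable GHA} (using its explicit construction, not only its statement) in each part, and then verify that the small perturbation it introduces does not destroy the approximate equivariance, the argument in each case being rescued by the regularity hypothesis placed on the target action.

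For part (1), starting from the $\varepsilon$-eFGHA $(\rho,f)$ with $f:Y\to X$, I would run the proof of Lemma \ref{L-measurable GHA}(1) with the roles of $X$ and $Y$ swapped: pick a countable dense sequence $\{y_n\}\subset Y$, set $B_1:=B'_\varepsilon(y_1)$ and $B_{n+1}:=B'_\varepsilon(y_{n+1})\setminus\bigcup_{j\leq n}B'_\varepsilon(y_j)$, and define $f_1:Y\to X$ by $f_1(y):=f(y_n)$ when $y\in B_n$. This produces a measurable $5\varepsilon$-GH approximation with $d_X(f(y),f_1(y))\leq 2\varepsilon$ for every $y\in Y$. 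To check the equivariance bound, I would fix $g\in G$ and $y\in Y$, take $n,m$ with $y\in B_n$ and $\beta_{\rho(g)}(y)\in B_m$, and decompose
\[
d_X(\alpha_g f_1(y),f_1\beta_{\rho(g)}(y))=d_X(\alpha_g f(y_n),f(y_m))
\]
through the intermediate point $f\beta_{\rho(g)}(y_n)$. The first resulting summand is $\leq\varepsilon$ by the eFGHA hypothesis; the second is $\leq d_Y(\beta_{\rho(g)}(y_n),y_m)+\varepsilon$ by the $\varepsilon$-isometry of $f$. Here the isometric hypothesis on $\beta$ plays the decisive role: $d_Y(\beta_{\rho(g)}(y_n),\beta_{\rho(g)}(y))=d_Y(y_n,y)\leq\varepsilon$, while $d_Y(\beta_{\rho(g)}(y),y_m)\leq\varepsilon$ because $\beta_{\rho(g)}(y)\in B_m$. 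Summing gives a $4\varepsilon$-bound, comfortably inside the required $5\varepsilon$.

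For part (2), the direction of $f$ is reversed: we have $f:X\to Y$, so I would apply Lemma \ref{L-measurable GHA}(1) verbatim with parameter $\varepsilon'$, obtaining a measurable $5\varepsilon'$-GHA $f_1:X\to Y$ satisfying $d_Y(f(x),f_1(x))\leq 2\varepsilon'$. Since shrinking $\varepsilon'$ only makes the continuity hypothesis stronger, I would assume $\varepsilon'\leq\varepsilon$, so $f_1$ is automatically a $5\varepsilon$-isometry. For the approximate commutation, fix $s\in S$ and $x\in X$ and write
\[
d_Y(\beta_s f_1(x),f_1\alpha_s(x))\leq d_Y(\beta_s f_1(x),\beta_s f(x))+d_Y(\beta_s f(x),f\alpha_s(x))+d_Y(f\alpha_s(x),f_1\alpha_s(x)).
\]
The first term is $<\varepsilon$ by the uniform continuity hypothesis applied to the pair $f_1(x),f(x)$, whose distance is $\leq 2\varepsilon'<3\varepsilon'$; the second is $\leq\varepsilon'$ by the $(\varepsilon',S)$-GHA property of $f$; the third is $\leq 2\varepsilon'$. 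The total is at most $\varepsilon+3\varepsilon'\leq 4\varepsilon<5\varepsilon$, as required.

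The one nontrivial point is the one already flagged in part (1): a purely black-box use of Lemma \ref{L-measurable GHA} would leave a term of the form $d_X(\alpha_g f_1(y),\alpha_g f(y))$, and since $\alpha$ is not assumed to carry any uniform continuity, that term is not directly controllable. What saves the argument is reopening the explicit step-function construction so that $f_1(y)$ and $f_1\beta_{\rho(g)}(y)$ become the values of $f$ at the canonical anchors $y_n,y_m$, after which the isometric behaviour of $\beta_{\rho(g)}$ (in (1)) or the modulus of continuity of $\beta_s$ (in (2)) closes the estimate. Everything else is triangle inequalities combined with the defining bounds of the respective approximations.
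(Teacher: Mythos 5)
Your proposal is correct, and in part (2) it is essentially what the paper intends (the paper only says ``similar to (1)''). In part (1), however, you and the paper use genuinely different decompositions, and the difference traces back to the direction of $f$. The paper's own proof takes $f,f_1:X\to Y$ and bounds $d_Y(f_1(\alpha_g(x)),\beta_{\rho(g)}(f_1(x)))$ by the black-box triangle inequality through $f(\alpha_g(x))$ and $\beta_{\rho(g)}(f(x))$: the term $d_Y(f_1(\alpha_g(x)),f(\alpha_g(x)))\leq 2\varepsilon$ is just the pointwise closeness of $f_1$ to $f$ evaluated at the point $\alpha_g(x)$ (so no continuity of $\alpha$ is ever needed), and the term $d_Y(\beta_{\rho(g)}(f(x)),\beta_{\rho(g)}(f_1(x)))$ is killed by the isometry of $\beta$, giving $2\varepsilon+\varepsilon+2\varepsilon=5\varepsilon$ without reopening the construction of Lemma \ref{L-measurable GHA}. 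You instead follow the paper's stated convention that an eFGHA $(\rho,f):\alpha\to\beta$ has $f:Y\to X$, for which the black-box route really does fail on the term $d_X(\alpha_g f_1(y),\alpha_g f(y))$; your fix --- identifying $f_1(y)$ and $f_1(\beta_{\rho(g)}(y))$ with the anchor values $f(y_n)$, $f(y_m)$, routing the estimate through $f(\beta_{\rho(g)}(y_n))$, and letting the isometry of $\beta$ enter as $d_Y(\beta_{\rho(g)}(y_n),\beta_{\rho(g)}(y))=d_Y(y_n,y)$ --- is correct and even yields the sharper constant $4\varepsilon$. So you have proved the lemma as literally stated, while the paper's proof establishes the mirror-image version; what your route costs is that you must use the explicit step-function construction rather than only the conclusion of Lemma \ref{L-measurable GHA}. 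One small caveat in part (2): once $f$ is fixed as an $(\varepsilon',S)$-GHA, replacing $\varepsilon'$ by something smaller is not literally ``without loss of generality'' (a GHA for one parameter is not one for a smaller parameter); but since the lemma lets you choose $\varepsilon'$, you should simply build the requirement $\varepsilon'\leq\varepsilon$ into that choice, after which your estimate $\varepsilon+3\varepsilon'\leq 4\varepsilon$ closes the argument.
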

\begin{proof}
(1) From Lemma \ref{L-measurable GHA}, there exists a $5\varepsilon$-GH approximation $f_1:X\to Y$ such that $f_1$ is measurable and $d_Y(f(x),f_1(x))\leq 2\varepsilon$ for every $x\in X$. On the other hand, for every $x\in X$, $g\in G$, one has 
\begin{eqnarray*}
d_Y(f_1(\alpha_g(x)),\beta_{\rho(g)}(f_1(x))&\leq &d_Y(f_1(\alpha_g(x)),f(\alpha_g(x)))+d_Y(f(\alpha_g(x))),\beta_{\rho(g)}(f(x)))+\\
&&+d_Y(\beta_{\rho(g)}(f(x)),\beta_{\rho(g)}(f_1(x)))\\
&\leq &2\varepsilon+\varepsilon+d_Y(f(x),f_1(x))\\
&\leq & 5\varepsilon.
\end{eqnarray*}
(2) The proof is similar to that for (1).
\end{proof}
Using the idea in the proof of \cite[Proposition 4.1]{LV}, we get the following lemma.
\begin{lem}
\label{L-GH approximations for Wasserstein spaces}
Let $\alpha_1$ and $\alpha_2$ be actions of a topological group $G$ on compact metric spaces $(X_1,d_1)$ and $(X_2,d_2)$, respectively. If $f:X_1\to X_2$ is an $\varepsilon$-measurable GH approximation from $\alpha_1$ to $\alpha_2$ then for every $p\geq 1$, the map $f_*:P_p(X_1)\to P_p(X_2)$ is an $\tilde{\varepsilon}$-measurable GH approximation from $(\alpha_1)_*$ and $(\alpha_2)_*$, where $\tilde{\varepsilon}=8\varepsilon+(9p(\diam(X_1)^{p-1}+\diam(X_2)^{p-1})\varepsilon)^{1/p}$. 
\end{lem}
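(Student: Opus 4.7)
The plan is to imitate the strategy of Lott--Villani in \cite[Proposition 4.1]{LV}, but keeping track of constants carefully and replacing the usual approximation inverse by the measurable version supplied by Lemma \ref{L-measurable GHA}(2). First I apply Lemma \ref{L-measurable GHA}(2) to $f$ to obtain a measurable $9\varepsilon$-GH approximation $f'_1 : X_2 \to X_1$ satisfying $\sup_{x\in X_1} d_1(x, f'_1 f(x)) \leq 4\varepsilon$ and $\sup_{y\in X_2} d_2(y, f f'_1(y)) \leq 4\varepsilon$; this is the measurable approximate inverse that will drive the reverse estimates and the surjectivity claim. The measurability of the pushforward $f_*$ itself follows from the fact that for every $\varphi \in C(X_2)$ the map $\mu \mapsto \int \varphi \, d(f_*\mu) = \int (\varphi\circ f)\, d\mu$ is Borel on $P_p(X_1)$, and such integrations generate the Borel structure on $P_p(X_2)$.

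The technical engine is the elementary inequality $|a^p - b^p| \leq p(a^{p-1} + b^{p-1})|a-b|$ for $a,b \geq 0$ and $p \geq 1$. Applied to $a = d_1(x,y)$, $b = d_2(f(x), f(y))$ together with the compactness bounds and the $\varepsilon$-isometric property of $f$, this gives
$$|d_2(f(x), f(y))^p - d_1(x,y)^p| \leq pD\varepsilon, \quad D := \diam(X_1)^{p-1} + \diam(X_2)^{p-1}.$$
Pushing an optimal coupling $\pi \in \Opt_p(\mu,\nu)$ forward under $f \times f$ and integrating yields $W_p(f_*\mu, f_*\nu)^p \leq W_p(\mu,\nu)^p + pD\varepsilon$, hence by subadditivity of $t \mapsto t^{1/p}$,
$$W_p(f_*\mu, f_*\nu) \leq W_p(\mu,\nu) + (pD\varepsilon)^{1/p}.$$
The same argument applied to $f'_1$ (with approximation constant $9\varepsilon$) gives $W_p((f'_1)_*\sigma_1, (f'_1)_*\sigma_2) \leq W_p(\sigma_1,\sigma_2) + (9pD\varepsilon)^{1/p}$. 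Combining this with Lemma \ref{L-contraction of Wasserstein distance} applied to the pair $(\mathrm{id}, f'_1 \circ f)$ (which yields $W_p(\mu, (f'_1)_* f_* \mu) \leq 4\varepsilon$), the triangle inequality delivers the reverse estimate
$$W_p(\mu,\nu) \leq W_p(f_*\mu, f_*\nu) + 8\varepsilon + (9pD\varepsilon)^{1/p}.$$
This completes the $\tilde\varepsilon$-isometric property with $\tilde\varepsilon = 8\varepsilon + (9pD\varepsilon)^{1/p}$.

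The remaining two properties are short. For surjectivity up to $\tilde\varepsilon$, given $\sigma \in P_p(X_2)$ set $\mu := (f'_1)_*\sigma$ and apply Lemma \ref{L-contraction of Wasserstein distance} to $(f \circ f'_1, \mathrm{id})$ to obtain $W_p(f_*\mu, \sigma) \leq 4\varepsilon \leq \tilde\varepsilon$. For equivariance, Lemma \ref{L-contraction of Wasserstein distance} applied to the pair $(f \circ \alpha_{1,g}, \alpha_{2,g} \circ f)$ yields
$$W_p\bigl(f_*((\alpha_1)_{g*}\mu), (\alpha_2)_{g*}(f_*\mu)\bigr)^p \leq \int_{X_1} d_2(f\alpha_{1,g}(x), \alpha_{2,g} f(x))^p \, d\mu(x) \leq \varepsilon^p,$$
uniformly in $g$, so the equivariance defect is at most $\varepsilon \leq \tilde\varepsilon$.

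The main obstacle is purely bookkeeping: all three features (isometry, density, equivariance) must be packaged into the single constant $\tilde\varepsilon$, and the cleanest way to get the advertised form $8\varepsilon + (9pD\varepsilon)^{1/p}$ is precisely to use the \emph{measurable} inverse from Lemma \ref{L-measurable GHA}(2), whose approximation constant $9\varepsilon = 3 \cdot 3\varepsilon$ and whose $4\varepsilon$-closeness bounds $\sup_x d_1(x, f'_1 f(x)), \sup_y d_2(y, f f'_1(y))$ are exactly what generate the factors $9$ and $2 \cdot 4 = 8$ in the final estimate. The rest is algebra with $W_p$ and the triangle inequality.
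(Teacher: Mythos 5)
Your proposal is correct and follows essentially the same route as the paper's proof: push an optimal coupling forward under $f\times f$, control $|d_2^p(f(x),f(y))-d_1^p(x,y)|$ via the elementary inequality $|a^p-b^p|\leq p(a^{p-1}+b^{p-1})|a-b|$, and use the measurable $9\varepsilon$-approximate inverse from Lemma \ref{L-measurable GHA} together with Lemma \ref{L-contraction of Wasserstein distance} to get the reverse estimate, the density, and the equivariance bound with exactly the same constants. The only (harmless) addition is your explicit remark on the Borel measurability of $f_*$, which the paper leaves implicit.
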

\begin{proof}
Let $\mu_1,\mu_1'\in P_p(X_1)$ and let $\pi_1\in \Opt_p(\mu_1,\mu_1')$. Then $(f\times f)_*\pi_1\in \prod(f_*\mu_1,f_*\mu_1')$ and hence
\begin{eqnarray*}
W_p^p(f_*\mu_1,f_*\mu_1')&\leq& \int_{X_2\times X_2}d_2^p(x_2,y_2)d((f\times f)_*\pi_1)(x_2,y_2) \\
&=&\int_{X_1\times X_1}d_2^p(f(x_1),f(y_1))d\pi_1(x_1,y_1).
\end{eqnarray*} 
As the function $h(x)=x^p, x\geq 0$ has $h'(x)=px^{p-1}$, we have for every $x,y\geq 0$, 
$$|x^p-y^p|\leq p|x-y|\max\{x^{p-1},y^{p-1}\}\leq p|x-y|(x^{p-1}+y^{p-1}).$$
Therefore, for every $x_1,y_1\in X$,
\begin{align*}
|d_2^p(f(x_1),f(y_1))&-d_1^p(x_1,y_1)|\leq \\
&\leq p|d_2(f(x_1),f(y_1))-d_1(x_1,y_1)|(d_2^{p-1}(f(x_1),f(y_1))+d_1^{p-1}(x_1,y_1)).
\end{align*}
And hence,  
$$
|d_2^p(f(x_1),f(y_1))-d_1^p(x_1,y_1)|\leq pM |d_2(f(x_1),f(y_1))-d_1(x_1,y_1)|\leq  pM\varepsilon,
$$
where $M=\diam(X_1)^{p-1}+\diam(X_2)^{p-1}$. It follows that 
$$W_p^p(f_*\mu_1,f_*\mu_1')\leq W_p^p(\mu_1,\mu_1')+pM\varepsilon,$$
and hence $W_p(f_*\mu_1,f_*\mu_1')\leq (W_p^p(\mu_1,\mu_1')+pM\varepsilon)^{1/p}\leq W_p(\mu_1,\mu_1')+(pM\varepsilon)^{1/p}.$

Let $f':X_2\to X_1$ be the measurable GH-approximate inverse of $f$ as in Lemma \ref{L-measurable GHA}. Then $f'$ is a $9\varepsilon$-Gromov-Hausdorff approximation from $X_2$ to $X_1$, and $$\sup_{x_1\in X_1}d_1(x_1,f'\circ f(x_1))\leq 4\varepsilon \mbox{ and } \sup_{x_2\in X_2}d_2(x_2,f\circ f'(x_2))\leq 4\varepsilon.$$ Applying the same process as above we get 
$$W_p(f'_*(f_*\mu_1),f'_*(f_*\mu'_1))\leq W_p(f_*\mu_1,f_*\mu_1')+(9pM\varepsilon)^{1/p}.$$
As $\sup_{x_1\in X_1}d_1(x_1,f'\circ f(x_1))\leq 4\varepsilon $, applying Lemma \ref{L-contraction of Wasserstein distance}, we get 
$$W_p((f'\circ f)_*\mu_1,\mu_1)\leq 4\varepsilon \mbox{ and } W_p((f'\circ f)_*\mu'_1,\mu'_1)\leq 4\varepsilon.$$
Therefore, 
\begin{eqnarray*}
W_p(\mu_1,\mu_1')&\leq &W_p(\mu_1, f'_*(f_*\mu_1))+W_p(f'_*(f_*\mu_1),f'_*(f_*\mu'_1))+W_p(f'_*(f_*\mu'_1),\mu'_1)\\
&\leq &8\varepsilon+ W_p(f_*\mu_1,f_*\mu_1')+(9pM\varepsilon)^{1/p}.
\end{eqnarray*}
On the other hand, given $\mu_2\in P_p(X_2)$, since $\sup_{x_2\in X_2}d_2(x_2,f\circ f'(x_2))\leq 4\varepsilon$, applying Lemma \ref{L-contraction of Wasserstein distance}, we get $W_p((f\circ f')_*\mu_2,\mu_2))\leq 4\varepsilon$. Therefore, $f_*:P_p(X_1)\to P_p(X_2)$ is an $\tilde{\varepsilon}$-GH approximation.

Finally, for every $\mu_1\in P_p(X_1),g\in G$, let $\pi\in\prod ((\alpha_{2,g}\circ f)_*\mu_1,(f\circ \alpha_{1,g})_*\mu_1))$. 
Since for every $g\in G$, $d_{sup}(f\circ \alpha_{1,g},\alpha_{2,g}\circ f)\leq \varepsilon$, applying Lemma \ref{L-contraction of Wasserstein distance} again we get 
\begin{eqnarray*}
W_p^p((\alpha_2)_{*,g}\circ f_*(\mu_1),f_*\circ (\alpha_1)_{*,g}(\mu_1))&=&W_p^p((\alpha_{2,g}\circ f)_*\mu_1,(f\circ \alpha_{1,g})_*\mu_1)\\
&\leq & \int_{X_2\times X_2}d_2^p(x_2,y_2)d\pi(x_2,y_2)\\
&=&\int_{X_1}d_1^p(\alpha_{2,g}\circ f(x_1),f\circ \alpha_{1,g}(x_1))d\mu_1(x_1)\\
&\leq&\varepsilon^p<\tilde{\varepsilon}^p.
\end{eqnarray*}
\end{proof}

\begin{proof}[Proof of Theorem \ref{T-equivariant Gromov-Hausdorff of actions on Wasserstein spaces}] the theorem now follows from Lemma \ref{L-GH approximations for Wasserstein spaces}.
\end{proof}

\begin{remark}
From Lemma \ref{L-measurable eGHA}, we see that if $\alpha_n$, $\alpha$ are isometric actions then the conclusion of Theorem \ref{T-equivariant Gromov-Hausdorff of actions on Wasserstein spaces} still holds for $d_{GH}$ instead of $d_{mGH}$. Similarly, if $\alpha_n$, $\alpha$ are continuous actions of a finitely generated group $G$ and $S$ is a finitely generating set of $G$, then the result is also true for $d_{GH,S}$.
\end{remark}

Now before proving Theorem \ref{T-Gromov-Hausdorff convergence of invariant measures for amenable groups}, let us recall the definition of a locally compact amenable group via F{\o}lner's property \cite[Section 8.4]{EW}. Let $G$ be a locally compact group and let $\lambda$ be a left Haar measure of $G$. We say $G$ is \textit{amenable} if for every compact subset $K$ of $G$ and every $\varepsilon>0$, there exists a Borel subset $F$ of $G$ such that $0<\lambda(F)<\infty$ and 
$$\frac{\lambda(gF\Delta F)}{\lambda(F)}<\varepsilon, \mbox{ for every } g\in K,$$ 
where $\Delta$ denotes the symmetric difference of sets. For more details on amenable groups, see \cite{P,Pier}.

Compact groups, locally compact solvable groups, and locally compact groups of polynomial growth are standard examples of amenable groups \cite[Corollary 13.5]{Pier}, \cite[Proposition 0.13]{P}.

Let $G$ be a locally compact amenable group and assume that $G$ is $\sigma$-compact, i.e. $G=\bigcup_{n=1}^\infty V_n$, where $V_n$ is a compact subset of $G$ for every $n\in \N$. We can consider $G=\bigcup_{n=1}^\infty K_n$, where $K_n$ is a compact subset of $G$ and $K_n\subset K_{n+1}$, for every $n\in \N$. As $G$ is amenable, there exists a sequence of Borel subsets $\{F_n\}_{n\in\N}$ of $G$ such that for every $n\in \N$, $0<\lambda(F_n)<\infty$ and 
$$\frac{\lambda(gF_n\Delta F_n)}{\lambda(F_n)}<\varepsilon, \mbox{ for every } g\in K_n.$$ 
Therefore, $\lim_{n\to\infty}\frac{\lambda(gF_n\Delta F_n)}{\lambda(F_n)}=0$, for every $g\in G$. A such sequence $\{F_n\}_{n\in \N}$ is called a left F{\o}lner sequence of $G$.

Another characterization of amenability of a locally compact group $G$ is that every continuous action of $G$ on a compact metric space $X$ always has an invariant probability measure, i.e. $P^G(X)\neq \varnothing$ \cite[Theorem 5.4]{Pier}.

Let $\alpha$ and $\alpha_1$ be continuous actions of a locally compact, $\sigma$-compact amenable group $G$ on compact metric spaces $(X,d)$ and $(X_1,d_1)$, respectively. Let $\lambda$ be a left Haar measure of $G$ and let $\{F_n\}_{n\in \N}$ be a left F{\o}lner sequence of $G$.

 Fix $p>1$ and $\varepsilon>0$. Let $T=\{\mu_1,\cdots,\mu_N\}$ be an $\varepsilon$-net of $P_p^G(X)$ and let $f:X\to X_1$ be an $\varepsilon$-measurable GH approximation. As $P_p(X_1)$ is compact in the weak*-topology, passing to a subsequence if necessary, we assume that $\frac{1}{\lambda(F_n)}\int_{F_n}\alpha_1(g)_*(f_*\mu_1)d\lambda(g)\to \varphi_T(\mu_1)$ in the weak*-topology as $n\to \infty$. Do the same process for $\mu_2,\mu_3,\cdots$, we can define that for every $1\leq k \leq N$, $\varphi_T(\mu_k):=\lim_{n\to\infty}\frac{1}{\lambda(F_n)}\int_{F_n}\alpha_1(g)_*(f_*\mu_k)d\lambda(g)$. 
 
 Now we prove that $\varphi_T(\mu_k)\in P_p^G(X_1)$ for every $1\leq k\leq N$. Given $u\in C(X_1)$, $\mu_k\in T$, and $h\in G$, one has 
 \begin{align*}
 \int_{X_1}ud(\varphi_T(\mu_k))&=\lim_{n\to \infty}\frac{1}{\lambda(F_n)}\int_{F_n}\int_{X_1}u(\alpha_1(g)\circ f(x_1))d\mu_k(x_1)d\lambda(g),\\
  \int_{X_1}ud(\alpha_1(h)_*\varphi_T(\mu_k))&=\lim_{n\to \infty}\frac{1}{\lambda(F_n)}\int_{F_n}\int_{X_1}u(\alpha_1(hg)\circ f(x_1))d\mu_k(x_1)d\lambda(g)\\
  &=\lim_{n\to \infty}\frac{1}{\lambda(F_n)}\int_{hF_n}\int_{X_1}u(\alpha_1(g)\circ f(x_1))d\mu_k(x_1)d\lambda(g).
 \end{align*}
 Therefore, 
 \begin{eqnarray*}
 \left| \int_{X_1}ud(\alpha_1(h)_*\varphi_T(\mu_k))-\int_{X_1}ud(\varphi_T(\mu_k))\right|&\leq& \lim_{n\to \infty}\frac{1}{\lambda(F_n)}\int_{F_n\Delta hF_n}\int_{X_1}|u(\alpha_1(g)\circ f(x_1))|d\mu_k(x_1)d\lambda(g)\\
 &\leq &\lim_{n\to \infty}\frac{\lambda(F_n\Delta hF_n)}{\lambda(F_n)}\|u\|_\infty=0.
 \end{eqnarray*}
 
 Let $f':X_1\to X$ be the $9\varepsilon$-measurable GH approximation inverse of $f$ as in Lemma \ref{L-measurable GHA}. Similarly as above, we can also define that for every $1\leq k\leq N$,
 $$\varphi'_T(\varphi_T(\mu_k)):=\lim_{n\to\infty}\frac{1}{\lambda(F_n)}\int_{F_n}\alpha(g)_*(f'_*(\varphi_T(\mu_k)))d\lambda(g)\in P_p^G(X_1).$$
\begin{lem}
\label{L-GHA for actions of amenable groups}
Let $\alpha$ and $\alpha_1$ be isometric actions of a locally compact amenable, $\sigma$-compact group $G$ on compact metric spaces $(X,d)$ and $(X_1,d_1)$ respectively. Let $\varepsilon>0$ and let $f:X\to X_1$ be an $\varepsilon$-measurable GH-approximation from $\alpha_1$ to $\alpha_2$. Let $T=\{\mu_1,\cdots,\mu_N\}$ be an $\varepsilon$-net of $P_p^G(X)$. Then for every $p\geq 1$, the set $\varphi_T(T)$ is a $D(\varepsilon)$-net of $P_p^G(X_1)$ and
$$ |W_p(\varphi_T(\mu_i),\varphi_T(\mu_j))-W_p(\mu_i,\mu_j)|\leq D(\varepsilon), \mbox{ for every } \mu_i,\mu_j\in T,$$
where $D(\varepsilon)= 28\varepsilon+(9p(\diam(X_1)^{p-1}+\diam(X_2)^{p-1})\varepsilon)^{1/p}$.
\end{lem}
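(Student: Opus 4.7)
The plan is to compare each $\varphi_T(\mu_k)$ directly with the pushforward $f_*\mu_k$ and then reduce the lemma to an application of Lemma \ref{L-GH approximations for Wasserstein spaces}. The key observation is that $f_*\mu_k$ is already approximately $G$-invariant: since $\mu_k$ is $G$-invariant, $\alpha_1$ is isometric, and $d_{sup}(f\circ\alpha(g),\alpha_1(g)\circ f)\le\varepsilon$, Lemma \ref{L-contraction of Wasserstein distance} gives $W_p(\alpha_1(g)_*(f_*\mu_k),f_*\mu_k)\le\varepsilon$ for every $g\in G$. Joint convexity of $W_p^p$ (realised by averaging optimal couplings against the F{\o}lner average $\lambda|_{F_n}/\lambda(F_n)$), combined with continuity of $W_p$ on $P_p(X_1)$ for compact $X_1$, then yields $W_p(f_*\mu_k,\varphi_T(\mu_k))\le\varepsilon$.

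For the distortion estimate, I would chain this observation with Lemma \ref{L-GH approximations for Wasserstein spaces} applied to $f$ and to its measurable inverse $f':X_1\to X$ from Lemma \ref{L-measurable GHA}. In one direction,
\[
W_p(\varphi_T(\mu_i),\varphi_T(\mu_j))\le 2\varepsilon+W_p(f_*\mu_i,f_*\mu_j)\le 2\varepsilon+W_p(\mu_i,\mu_j)+(pM\varepsilon)^{1/p},
\]
with $M:=\diam(X)^{p-1}+\diam(X_1)^{p-1}$. For the reverse direction, pushing forward by $f'$ and using $\sup_{x\in X}d_X(x,f'f(x))\le4\varepsilon$ with Lemma \ref{L-contraction of Wasserstein distance} gives $W_p(\mu_i,\mu_j)\le W_p(f'_*f_*\mu_i,f'_*f_*\mu_j)+8\varepsilon$; applying the analogue of Lemma \ref{L-GH approximations for Wasserstein spaces} to the $9\varepsilon$-approximation $f'$ then produces $W_p(f'_*f_*\mu_i,f'_*f_*\mu_j)\le W_p(f_*\mu_i,f_*\mu_j)+(9pM\varepsilon)^{1/p}$. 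Assembling gives $|W_p(\varphi_T(\mu_i),\varphi_T(\mu_j))-W_p(\mu_i,\mu_j)|\le 10\varepsilon+(9pM\varepsilon)^{1/p}\le D(\varepsilon)$.

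For the net property, given $\nu_1\in P_p^G(X_1)$, I would set $\sigma:=f'_*\nu_1\in P_p(X)$ and take its amenable average $\psi(\sigma):=\lim_n\frac{1}{\lambda(F_n)}\int_{F_n}\alpha(g)_*\sigma\,d\lambda(g)\in P_p^G(X)$ along a suitable subsequence, exactly as in the paragraph preceding the lemma. By hypothesis there is $\mu_k\in T$ with $W_p(\mu_k,\psi(\sigma))\le\varepsilon$. The crucial auxiliary fact is that $f'$ is itself approximately equivariant: a short triangle-inequality computation using $d_{sup}(f\circ\alpha(g),\alpha_1(g)\circ f)\le\varepsilon$, the isometric property of $\alpha_1(g)$, the near-inverse relation $\sup_y d_{X_1}(y,ff'(y))\le4\varepsilon$, and the $\varepsilon$-isometric property of $f$ yields $d_{sup}(\alpha(g)\circ f',f'\circ\alpha_1(g))\le 10\varepsilon$. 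Combined with $\alpha_1(g)_*\nu_1=\nu_1$ and Lemma \ref{L-contraction of Wasserstein distance} this gives $W_p(\alpha(g)_*\sigma,\sigma)\le 10\varepsilon$ for every $g$, whence $W_p(\sigma,\psi(\sigma))\le10\varepsilon$ by the same convexity argument as above. Chaining the triangle inequality with $W_p(\nu_1,(f\circ f')_*\nu_1)\le4\varepsilon$, the estimate $W_p(f_*\sigma,f_*\mu_k)\le W_p(\sigma,\mu_k)+(pM\varepsilon)^{1/p}\le 11\varepsilon+(pM\varepsilon)^{1/p}$, and the $\varepsilon$-bound on $W_p(f_*\mu_k,\varphi_T(\mu_k))$ produces $W_p(\nu_1,\varphi_T(\mu_k))\le 16\varepsilon+(pM\varepsilon)^{1/p}\le D(\varepsilon)$.

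The main obstacle I anticipate is bookkeeping of the constants, and in particular establishing the approximate equivariance of $f'$, which must be derived indirectly from the equivariance of $f$ together with the two-sided near-inverse relations of Lemma \ref{L-measurable GHA} and the isometry hypothesis on $\alpha_1$. Once this is in hand, the remaining arguments are variants of the coupling-and-averaging technique already used in the proof of Lemma \ref{L-GH approximations for Wasserstein spaces}, closely paralleling \cite[Proposition 4.1]{LV}.
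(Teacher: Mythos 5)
Your proposal is correct, and it reaches the stated bound $D(\varepsilon)$ (in fact with a little room to spare), but it is organized differently from the paper's proof. The paper never isolates your key observation that $W_p(f_*\mu_k,\varphi_T(\mu_k))\leq\varepsilon$; instead, for the lower bound on $W_p(\varphi_T(\mu_i),\varphi_T(\mu_j))$ it introduces the doubly averaged composite $\varphi'_T\circ\varphi_T$ (a limit over two F{\o}lner averages in $g$ and $h$) and shows it moves each $\mu_i$ by at most $14\varepsilon$, via the pointwise estimate $d(\alpha(g)f'\alpha_1(h)f(x),\alpha(gh)x)\leq 14\varepsilon$ and an averaged family of optimal couplings $\pi_{gh,i}$; the net property is handled the same way with $\varphi_{T\cup\{\mu\}}\circ\varphi'_{T\cup\{\mu\}}$. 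Your route replaces this by two one-step comparisons: $\varphi_T(\mu_k)$ against $f_*\mu_k$ (using $G$-invariance of $\mu_k$, Lemma \ref{L-contraction of Wasserstein distance}, and convexity of $W_p^p$ under the F{\o}lner average), and $\sigma=f'_*\nu_1$ against its average $\psi(\sigma)$ (using the approximate equivariance $d_{sup}(\alpha(g)\circ f',f'\circ\alpha_1(g))\leq 10\varepsilon$, which you correctly derive from the equivariance of $f$, the $\varepsilon$-isometric property of $f$, the near-inverse bounds of Lemma \ref{L-measurable GHA}, and the isometry of $\alpha_1(g)$ --- this last derivation is the one ingredient not present in the paper and it does need the isometry hypothesis). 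What this buys you is the avoidance of the double limit and slightly sharper constants ($10\varepsilon+(9pM\varepsilon)^{1/p}$ for the distortion instead of $28\varepsilon+(9pM\varepsilon)^{1/p}$); what it costs is the extra lemma on approximate equivariance of $f'$ and an explicit appeal to convexity of $W_p^p$ along the average, which the paper instead realizes by hand through averaged couplings --- both versions tacitly require a measurable dependence of the chosen couplings on the group element, so neither is more demanding on that point.
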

\begin{proof}
Let $\mu_i,\mu_j\in T$ and let $\pi\in \Opt_p(\mu_i,\mu_j)$ then one has $$\lim_{n\to\infty}\frac{1}{\lambda(F_n)}\int_{F_n}\alpha_1(g)_*((f\times f)_*\pi)d\lambda(g)\in \prod(\varphi_T(\mu_i),\varphi_T(\mu_j)).$$
Therefore, 
\begin{eqnarray*}
W_p^p(\varphi_T(\mu_i),\varphi_T(\mu_j))&\leq &\int_{X_1\times X_1}d_1^p(x_1,y_1)d\big(\lim_{n\to\infty}\frac{1}{\lambda(F_n)}\int_{F_n}\alpha_1(g)_*((f\times f)_*\pi)d\lambda(g)\big)(x_1,y_1)\\
&=&\lim_{n\to\infty}\int_{X_1\times X_1}\frac{1}{\lambda(F_n)}\int_{F_n}d_1^p(x_1,y_1)d\big(\alpha_1(g)_*((f\times f)_*\pi)\big)(x_1,y_1)d\lambda(g)\\
&=&\lim_{n\to\infty}\frac{1}{\lambda(F_n)}\int_{X\times X}\int_{F_n}d^p(f(x),f(y))d\pi(x,y)d\lambda(g)\\
&\leq & \lim_{n\to\infty}\frac{1}{\lambda(F_n)}\int_{F_n}\int_{X\times X}(d^p(x,y)+pM\varepsilon)d\pi(x,y)d\lambda(g)\\
&\leq& W_p^p(\mu_i,\mu_j)+pM\varepsilon, 
\end{eqnarray*}
where $M=\diam(X_1)^{p-1}+\diam(X_2)^{p-1}$. Thus, 
\begin{align}
\label{P-GH approximation of actions of amenable groups}
W_p(\varphi_T(\mu_i),\varphi_T(\mu_j))\leq  W_p(\mu_i,\mu_j)+(pM\varepsilon)^{1/p}.
\end{align}

As $f':X_1\to X$ is a $9\varepsilon$-measurable GH approximation, similar as above, we get that $W_p(\varphi'_T(\varphi_T(\mu_i)),\varphi'_T(\varphi_T(\mu_j)))\leq  W_p(\varphi_T(\mu_i),\varphi_T(\mu_j))+(9pM\varepsilon)^{1/p}$, for every $i,j$.

On the other hand, for every $i$, one has 
\begin{eqnarray*}
\varphi'_T(\varphi_T(\mu_i))&=&\lim_{n\to\infty}\frac{1}{\lambda(F_n)}\int_{F_n}\alpha(g)_*(f'_*(\varphi_T(\mu_i)))d\lambda(g)\\
&=&\lim_{n\to\infty}\frac{1}{\lambda(F_n)}\int_{F_n}\alpha(g)_*(f'_*(\lim_{m\to\infty}\frac{1}{\lambda(F_m)}\int_{F_m}\alpha_1(h)_*(f_*\mu_i)d\lambda(h)))d\lambda(g)\\
&=&\lim_{m,n\to\infty}\frac{1}{\lambda(F_n)}\frac{1}{\lambda(F_m)}\int_{F_n}\alpha(g)_*(f'_*(\int_{F_m}\alpha_1(h)_*(f_*\mu_i)d\lambda(h)))d\lambda(g)\\
&=&\lim_{m,n\to\infty}\frac{1}{\lambda(F_n)}\frac{1}{\lambda(F_m)}\int_{F_m}\int_{F_n}\alpha(g)_*f'_*\alpha_1(h)_*f_*\mu_id\lambda(h)d\lambda(g).
\end{eqnarray*} 
Since $f$ is an $\varepsilon$-GH approximation from $\alpha$ to $\alpha_1$, for every $h\in G$, we get $$\sup_{x\in X}d_1(\alpha_1(h)\circ f(x),f\circ \alpha(h)(x))\leq \varepsilon.$$ 
In another side, because $\sup_{x\in X}d(x,f'(f(x))\leq 4\varepsilon$, and $f':X_1\to X$ is a $9\varepsilon$-GH approximation, for every $g,h\in G$, $x\in X$, one has 
\begin{eqnarray*}
d(\alpha(g)f'\alpha_1(h)f(x),\alpha(gh)(x))&=&d(f'\alpha_1(h)f(x),\alpha(h)(x))\\
&\leq& d(f'\alpha_1(h)f(x),f'f\alpha(h)(x))+d(f'f\alpha(h)(x),\alpha(h)(x)) \\
&\leq& d_1(\alpha_1(h)f(x),f\alpha(h)(x))+9\varepsilon+4\varepsilon\\
&\leq& 14\varepsilon
\end{eqnarray*}
Therefore, applying Lemma \ref{L-contraction of Wasserstein distance}, for every $g,h\in G$, $1\leq i\leq N$, we get $$W_p(\alpha(g)_*f'_*\alpha_1(h)_*f_*\mu_i,\mu_i)=W_p(\alpha(g)_*f'_*\alpha_1(h)_*f_*\mu_i,\alpha(gh)_*\mu_i)\leq 14\varepsilon.$$
For every $g,h\in G, 1\leq i\leq N$, let $\pi_{gh,i}\in \Opt_p(\mu_i,\alpha(g)_*f'_*\alpha_1(h)_*f_*\mu_i)$ then $$\int_{X\times X}d^p(x,y)d\pi_{gh,i}(x,y)\leq (14\varepsilon)^p \mbox{ and }$$
\begin{eqnarray*}
\pi_i:&=&\lim_{m,n\to\infty}\frac{1}{\lambda(F_n)}\frac{1}{\lambda(F_m)}\int_{F_m}\int_{F_n}\pi_{gh,i}d\lambda(h)d\lambda(g)\\
&\in& \prod\big(\mu_i,\lim_{m,n\to\infty}\frac{1}{\lambda(F_n)}\frac{1}{\lambda(F_m)}\int_{F_m}\int_{F_n}\alpha(g)_*f'_*\alpha_1(h)_*f_*\mu_id\lambda(h)d\lambda(g)\big)\\
&=&\prod\big(\mu_i,\varphi'_T(\varphi_T(\mu_i))\big).
\end{eqnarray*}
Therefore, 
\begin{eqnarray*}
W_p^p(\varphi'_T(\varphi_T(\mu_i)),\mu_i)&\leq& \int_{X\times X}d^p(x,y)d\pi_i(x,y)\\
&=&\int_{X\times X}d^p(x,y)d\big(\lim_{m,n\to\infty}\frac{1}{\lambda(F_n)}\frac{1}{\lambda(F_m)}\int_{F_m}\int_{F_n}\pi_{gh,i}d\lambda(h)d\lambda(g)\big)(x,y)\\
&=&\lim_{m,n\to\infty}\frac{1}{\lambda(F_n)}\frac{1}{\lambda(F_m)}\int_{X\times X}d^p(x,y)\int_{F_m}\int_{F_n}d\pi_{gh,i}(x,y)d\lambda(g)d\lambda(h)\\
&=&\lim_{m,n\to\infty}\frac{1}{\lambda(F_n)}\frac{1}{\lambda(F_m)}\int_{F_m}\int_{F_n}\int_{X\times X}d^p(x,y)d\pi_{gh,i}(x,y)d\lambda(g)d\lambda(h)\\
&\leq &\lim_{m,n\to\infty}\frac{1}{\lambda(F_n)}\frac{1}{\lambda(F_m)}\int_{F_m}\int_{F_n}(14\varepsilon)^pd\lambda(g)d\lambda(h)\\
&=&(14\varepsilon)^p.
\end{eqnarray*} 
And hence for every $\mu_i,\mu_j\in T$,
\begin{eqnarray*}
W_p(\mu_i,\mu_j)&\leq& W_p(\mu_i, \varphi'_T(\varphi_T(\mu_i)))+W_p(\varphi'_T(\varphi_T(\mu_i)),\varphi'_T(\varphi_T(\mu_j))+W_p(\varphi'_T(\varphi_T(\mu_j)),\mu_j)\\
&\leq& W_p(\varphi_T(\mu_i),\varphi_T(\mu_j))+28\varepsilon+(9pM\varepsilon)^{1/p}.
\end{eqnarray*}
Combining with (\ref{P-GH approximation of actions of amenable groups}) we get that $|W_p(\varphi_T(\mu_i),\varphi_T(\mu_j))-W_p(\mu_i,\mu_j)|\leq D(\varepsilon)$, for every $\mu_i,\mu_j\in T$.

Finally, we will prove that $\varphi_T(T)$ is a $D(\varepsilon)$-net of $P_p^G(X_1)$. Let $\mu\in P_p^G(X_1) $. There exists a subsequence $\{n_1\}$ such that $\lim_{n_1\to\infty}\frac{1}{\lambda(F_{n_1})}\int_{F_{n_1}}\alpha(g)_*(f'_*\mu)d\lambda(g)$ exists in the weak*-topology of $P_p(X)$, and we denote this limit as $\varphi'_{T\cup\{\mu\}}(\mu)$. Then there exists a subsequence $\{n_2\}$ of $\{n_1\}$ such that 
$$\lim_{n_2\to\infty}\frac{1}{\lambda(F_{n_2})}\int_{F_{n_2}}\alpha_1(g)_*(f_*\varphi'_{T\cup\{\mu\}}(\mu))d\lambda(g)$$
 exists in the weak*-topology of $P_p(X_1)$, and we denote this limit as $\varphi_{T\cup\{\mu\}}(\varphi'_{T\cup\{\mu\}}(\mu))$. Similar as above, we get
\begin{align*}
W_p(\varphi_{T\cup\{\mu\}}(\varphi'_{T\cup\{\mu\}}(\mu)),\mu)&\leq 14\varepsilon, \mbox{ and }\\
W_p(\varphi_T(\mu_k),\varphi_{T\cup\{\mu\}}(\varphi'_{T\cup\{\mu\}}(\mu))&\leq W_p(\mu_k,\varphi'_{T\cup\{\mu\}}(\mu))+(pM\varepsilon)^{1/p}, \mbox{ for every } 1\leq k\leq N. 
\end{align*}  
  As $T$ is an $\varepsilon$-net in $P_p^G(X)$, there exists $\mu_i\in T$ such that $W_p(\mu_i, \varphi'_{T\cup\{\mu\}}(\mu))\leq \varepsilon$. Then
\begin{eqnarray*}
W_p(\varphi_T(\mu_i),\mu)&\leq &W_p(\varphi_T(\mu_i),\varphi_{T\cup\{\mu\}}(\varphi'_{T\cup\{\mu\}}(\mu))+W_p(\varphi_{T\cup\{\mu\}}(\varphi'_{T\cup\{\mu\}}(\mu)),\mu)\\
&\leq & W_p(\mu_i,\varphi'_{T\cup\{\mu\}}(\mu))+(pM\varepsilon)^{1/p}+14\varepsilon\\
&\leq& 15\varepsilon+(pM\varepsilon)^{1/p}< D(\varepsilon).
\end{eqnarray*}
\end{proof}
\begin{proof}[Proof of Theorem \ref{T-Gromov-Hausdorff convergence of invariant measures for amenable groups}]
From Lemma \ref{L-closedness of isometric actions} we obtain that the action $\alpha$ is an isometric action. Then applying Lemma \ref{L-finite net implies closeness of GH}, Lemma \ref{L-measurable eGHA} and Lemma \ref{L-GHA for actions of amenable groups} we get the result.
\end{proof}
Now we illustrate examples satisfying our assumptions of Theorems \ref{T-equivariant Gromov-Hausdorff of actions on Wasserstein spaces}, \ref{T-Gromov-Hausdorff convergence of invariant measures for amenable groups} and Corollary \ref{C-uniquely ergodic}.
\begin{example}
\label{E-GH distance isometry}
We consider $S^1_r=\{z\in \C:|z|=r\}$ with the usual distance and $S^1=S^1_1$. For every $n\in \N$, let $X_n=S^1_{1/n}\times S^1$ with the canonical product metric $d_{X_n}$ and let $\alpha_n$ be the action of $S^1$ on $X_n$ defined by $\alpha_{n,a}(s,z)=(as,az)$ for every $s\in S^1_{1/n}, a,z\in S^1$. Let $\alpha$ be the action on $X=S^1$ by setting $\alpha_a(z)=az$ for every $a,z\in S^1$. Then $\alpha_n$, $\alpha$ are isometric actions. We define the map $h_n:X_n\to X$ by $h_n(s,z):=z$ for every $s\in S^1_{1/n},z\in X$. For every $n\in \N$, fix $\bar{s_n}\in S^1_{1/n}$, we define the map $f_n:X\to X_n$ by $f_n(z)=(\bar{s_n},z)$ for every $z\in X$. Then for every $n\in \N, s\in S^1_{1/n},z\in X, a\in S^1$, we have
$$h_n\alpha_{n,a}(s,z)=h_n(as,az)=az=\alpha_ah_n(s,z).$$
Similarly to the arguments in Example \ref{E-GH distance} we get that the maps $h_n:X_n\to X, f_n:X\to X_n$ are $\frac{\pi}{n}$-isometry. On the other hand, for every $n\in \N$, $a\in S^1$, $x\in X$, we also have
$$d_{X_n}(f_n(\alpha_a(x)),\alpha_{n,a}(f_n(x)))=d_{X_n}((\bar{s_n},ax),(a\bar{s_n},ax))\leq \frac{\pi}{n}.$$
Therefore $d_{GH}(\alpha_n,\alpha)\to 0$ as $n\to \infty$.
Now we will prove that $\alpha$ is uniquely ergodic. Let $\mu$ be a probability $S^1$-invariant measure of $\alpha$. Then for every $g\in S^1$ and every Borel subset $A$ of $X$ we have 
$\mu(\alpha_g^{-1}A)=\mu(A)$ and hence $\mu(g^{-1}A)=\mu(A)$. Therefore, $\mu$ must coincide with the normalized Haar measure of $X$.
\end{example}
\begin{example}
We choose $(X_n,d_{X_n}), (X,d)$ as in the previous example. Let $a\in S^1$ such that $a^m\neq 1$ for every $m\in \N$. Let $\alpha$ be the action of $\Z$ on $X$ defined by $\alpha_m(x)=a^mx$ for every $m\in \Z,x\in X$. For every $n\in \N$, let $\alpha_n$ be the action of $\Z$ on $X_n$ defined by $\alpha_{n,m}(s,z)=(a^ms,a^mz)$ for every $s\in S^1_{1/n}, z\in X, m\in \Z$. Then $\alpha_n$, $\alpha$ are isometric actions. Similar as Example \ref{E-GH distance isometry}, we have $d_{GH}(\alpha_n,\alpha)\to 0$ as $n\to \infty$. On the other hand, applying \cite[Theorem 6.18]{Walters82} we get that $\alpha$ is uniquely ergodic. 
\end{example}
\begin{example}
Let $X=S^1\times S^1$ with its canonical metric $d_X$. For every $n\in \N$ let $X_n=S^1_{1/n}\times X$ with its usual product metric $d_{X_n}$. Let $a_1,a_2,a_3\in S^1$ such that $a_1^m\neq 1, a_3^m\neq 1$ for every $m\in \N$ and $a_2^5=1$. We define the action $\alpha$ of $\Z$ on $X$ by $\alpha_{m}(s_2,s_3):=(a_2^ms_2,a_3^ms_3)$ for every $m\in \Z, s_2,s_3\in S^1$, and the action
 $\alpha_n$ of $\Z$ on $X_n$ by $\alpha_{n,m}(s_1,s_2,s_3):=(a_1^ms_1,a_2^ms_2,a_3^ms_3)$ for every $m\in \Z, s_1\in S^1_{1/n},s_2,s_3\in S^1$.
 Then $\alpha$, $\alpha_n$ are isometric actions and $\lim_{n\to\infty}d_{GH}(\alpha_n,\alpha)=0$. As the action $\alpha$ is not minimal, applying \cite[Theorem 6.20]{Walters82} we get that $\alpha$ is not uniquely ergodic.
\end{example}

\begin{bibdiv}
\begin{biblist}
\bib{AH}{book}{
   author={Aoki, N.},
   author={Hiraide, K.},
   title={Topological theory of dynamical systems},
   series={North-Holland Mathematical Library},
   volume={52},
   note={Recent advances},
   publisher={North-Holland Publishing Co., Amsterdam},
   date={1994},
 }
\bib{AM}{article}{
   author={Arbieto, Alexander},
   author={Morales, Carlos Arnoldo},
   title={Topological stability from Gromov-Hausdorff viewpoint},
   journal={Discrete Contin. Dyn. Syst.},
   volume={37},
   date={2017},
   number={7},
   pages={3531--3544},
   
}
\bib{Bowen75}{book}{
   author={Bowen, Rufus},
   title={Equilibrium states and the ergodic theory of Anosov
   diffeomorphisms},
   series={Lecture Notes in Mathematics, Vol. 470},
   publisher={Springer-Verlag, Berlin-New York},
   date={1975},
  
}
\bib{Bowen75b}{article}{
   author={Bowen, Rufus},
   title={$\omega $-limit sets for axiom ${\rm A}$ diffeomorphisms},
   journal={J. Differential Equations},
   volume={18},
   date={1975},
   number={2},
   pages={333--339},  
}

\bib{BW}{article}{
   author={Bowen, Rufus},
   author={Walters, Peter},
   title={Expansive one-parameter flows},
   journal={J. Differential Equations},
   volume={12},
   date={1972},
   pages={180--193},  
}		

\bib{BBI}{book}{
   author={Burago, Dmitri},
   author={Burago, Yuri},
   author={Ivanov, Sergei},
   title={A course in metric geometry},
   series={Graduate Studies in Mathematics},
   volume={33},
   publisher={American Mathematical Society, Providence, RI},
   date={2001},
  
}
\bib{CC}{article}{
   author={Cheeger, Jeff},
   author={Colding, Tobias H.},
   title={Lower bounds on Ricci curvature and the almost rigidity of warped
   products},
   journal={Ann. of Math. (2)},
   volume={144},
   date={1996},
   number={1},
   pages={189--237},
   
}
\bib{CFG}{article}{
   author={Cheeger, Jeff},
   author={Fukaya, Kenji},
   author={Gromov, Mikhael},
   title={Nilpotent structures and invariant metrics on collapsed manifolds},
   journal={J. Amer. Math. Soc.},
   volume={5},
   date={1992},
   number={2},
   pages={327--372},
  
}

\bib{ChungLee}{article}{
   author={Chung, Nhan-Phu},
   author={Lee, Keonhee},
   title={Topological stability and pseudo-orbit tracing property of group
   actions},
   journal={Proc. Amer. Math. Soc.},
   volume={146},
   date={2018},
   number={3},
   pages={1047--1057},
   
}
\bib{ChungLi}{article}{
   author={Chung, Nhan-Phu},
   author={Li, Hanfeng},
   title={Homoclinic groups, IE groups, and expansive algebraic actions},
   journal={Invent. Math.},
   volume={199},
   date={2015},
   number={3},
   pages={805--858},
  }
 \bib{DLM}{article}{
   author={Dong, Meihua},
   author={Lee, Keonhee},
   author={Morales, Carlos},  
   title={Gromov-Hausdorff perturbations of group actions},
   status={preprint},
   }  
\bib{EW}{book}{
   author={Einsiedler, Manfred},
   author={Ward, Thomas},
   title={Ergodic theory with a view towards number theory},
   series={Graduate Texts in Mathematics},
   volume={259},
   publisher={Springer-Verlag London, Ltd., London},
   date={2011},
   
}

\bib{Fu86}{article}{
   author={Fukaya, Kenji},
   title={Theory of convergence for Riemannian orbifolds},
   journal={Japan. J. Math. (N.S.)},
   volume={12},
   date={1986},
   number={1},
   pages={121--160},
  
}

	\bib{Fu88}{article}{
   author={Fukaya, Kenji},
   title={A boundary of the set of the Riemannian manifolds with bounded
   curvatures and diameters},
   journal={J. Differential Geom.},
   volume={28},
   date={1988},
   number={1},
   pages={1--21},
   
}	
   \bib{Fu90}{article}{
   author={Fukaya, Kenji},
   title={Hausdorff convergence of Riemannian manifolds and its
   applications},
   conference={
      title={Recent topics in differential and analytic geometry},
   },
   book={
      series={Adv. Stud. Pure Math.},
      volume={18},
      publisher={Academic Press, Boston, MA},
   },
   date={1990},
   pages={143--238},
}
		
\bib{FY}{article}{
   author={Fukaya, Kenji},
   author={Yamaguchi, Takao},
   title={The fundamental groups of almost non-negatively curved manifolds},
   journal={Ann. of Math. (2)},
   volume={136},
   date={1992},
   number={2},
   pages={253--333},
  
}   
\bib{Gromov}{article}{
   author={Gromov, Mikhael},
   title={Groups of polynomial growth and expanding maps},
   journal={Inst. Hautes \'{E}tudes Sci. Publ. Math.},
   number={53},
   date={1981},
   pages={53--73},
  }   
\bib{GP}{article}{
   author={Grove, Karsten},
   author={Petersen, Peter},
   title={Manifolds near the boundary of existence},
   journal={J. Differential Geom.},
   volume={33},
   date={1991},
   number={2},
   pages={379--394},
  
} 
\bib{Harvey16}{article}{
   author={Harvey, John},
   title={Equivariant Alexandrov geometry and orbifold finiteness},
   journal={J. Geom. Anal.},
   volume={26},
   date={2016},
   number={3},
   pages={1925--1945},
  
}
\bib{Harvey17}{article}{
   author={Harvey, John},
   author={Searle, Catherine},
   title={Orientation and symmetries of Alexandrov spaces with applications
   in positive curvature},
   journal={J. Geom. Anal.},
   volume={27},
   date={2017},
   number={2},
   pages={1636--1666},
   
}
   \bib{KDD}{article}{
   author={Khan, Abdul Gaffar},
   author={Das, Pramod},
   author={Das, Tarun},  
   title={GH-stability and spectral decomposition for group actions},
   status={arXiv:1804.05920v3},
   }
   
\bib{LV}{article}{
   author={Lott, John},
   author={Villani, C\'{e}dric},
   title={Ricci curvature for metric-measure spaces via optimal transport},
   journal={Ann. of Math. (2)},
   volume={169},
   date={2009},
   number={3},
   pages={903--991}, 
} 
   \bib{Mey}{article}{
   author={Meyerovitch, Tom },
   title={Pseudo-orbit tracing and algebraic actions of countable amenable groups},
   status={to appear Ergodic Theory Dynam. Systems},
   }
   
   \bib{Oprocha}{article}{
   author={Oprocha, Piotr},
   title={Shadowing in multi-dimensional shift spaces},
   journal={Colloq. Math.},
   volume={110},
   date={2008},
   number={2},
   pages={451--460},
   
}
   \bib{OT}{article}{
   author={Osipov, Alexey V.},
   author={Tikhomirov, Sergey B.},
   title={Shadowing for actions of some finitely generated groups},
   journal={Dyn. Syst.},
   volume={29},
   date={2014},
   number={3},
   pages={337--351},
  }
\bib{P}{book}{
   author={Paterson, Alan L. T.},
   title={Amenability},
   series={Mathematical Surveys and Monographs},
   volume={29},
   publisher={American Mathematical Society, Providence, RI},
   date={1988},
  
}

   \bib{Petersen}{book}{
   author={Petersen, Peter},
   title={Riemannian geometry},
   series={Graduate Texts in Mathematics},
   volume={171},
   edition={3},
   publisher={Springer, Cham},
   date={2016},
  
}
\bib{Pier}{book}{
   author={Pier, Jean-Paul},
   title={Amenable locally compact groups},
   series={Pure and Applied Mathematics (New York)},
   note={A Wiley-Interscience Publication},
   publisher={John Wiley \& Sons, Inc., New York},
   date={1984},
  
}
\bib{PT}{article}{
   author={Pilyugin, Sergei Yu.},
   author={Tikhomirov, Sergei B.},
   title={Shadowing in actions of some abelian groups},
   journal={Fund. Math.},
   volume={179},
   date={2003},
   number={1},
   pages={83--96},
  
}
\bib{Rong}{article}{
   author={Rong, Xiaochun},
   title={Convergence and collapsing theorems in Riemannian geometry},
   conference={
      title={Handbook of geometric analysis, No. 2},
   },
   book={
      series={Adv. Lect. Math. (ALM)},
      volume={13},
      publisher={Int. Press, Somerville, MA},
   },
   date={2010},
   pages={193--299},
 }
\bib{Shioya}{book}{
   author={Shioya, Takashi},
   title={Metric measure geometry},
   series={IRMA Lectures in Mathematics and Theoretical Physics},
   volume={25},
   note={Gromov's theory of convergence and concentration of metrics and
   measures},
   publisher={EMS Publishing House, Z\"{u}rich},
   date={2016},
  
}

\bib{Villani03}{book}{
   author={Villani, C\'{e}dric},
   title={Topics in optimal transportation},
   series={Graduate Studies in Mathematics},
   volume={58},
   publisher={American Mathematical Society, Providence, RI},
   date={2003},
  
}  
		
\bib{Villani09}{book}{
   author={Villani, C\'{e}dric},
   title={Optimal transport},
   series={Grundlehren der Mathematischen Wissenschaften [Fundamental
   Principles of Mathematical Sciences]},
   volume={338},
   note={Old and new},
   publisher={Springer-Verlag, Berlin},
   date={2009},
  }
   \bib{Walters78}{article}{
   author={Walters, Peter},
   title={On the pseudo-orbit tracing property and its relationship to stability},
   book={
      series={Lecture Notes in Math.}
      volume={\bf{668}},
      publisher={Springer, Berlin},
   },
   date={1978},
   pages={231--244},
  
}
\bib{Walters82}{book}{
   author={Walters, Peter},
   title={An introduction to ergodic theory},
   series={Graduate Texts in Mathematics},
   volume={79},
   publisher={Springer-Verlag, New York-Berlin},
   date={1982},
  }

\end{biblist}
\end{bibdiv}
\end{document}